\newcommand{\pushright}[1]{\ifmeasuring@#1\else\omit\hfill$\displaystyle#1$\fi\ignorespaces}
\newcommand{\pushleft}[1]{\ifmeasuring@#1\else\omit$\displaystyle#1$\hfill\fi\ignorespaces}
\newtheorem{thm}{\bf Theorem}[section]
\newtheorem{prop}[thm]{\bf Proposition}
\newtheorem{property}{\bf Property}
\newtheorem{lem}[thm]{\bf Lemma}
\newtheorem{ex}{\bf Example}[section]
\newcommand{\Crossroad}{\mathcal{C}}
\newcommand{\oCrossroad}{\mathcal{C}^o}
\newcommand{\aCrossroad}{\mathcal{C}^r}
\newcommand{\crossroad}{\mathfrak{c}}
\newcommand{\boundary}{\mathcal{B}}
\newcommand{\Cuts}{\mathrm{Cuts}}
\DeclareMathOperator{\Vol}{Vol}
\DeclareMathOperator*{\arginf}{arginf}
\title{How many T-tessellations on $k$ lines? Existence of associated Gibbs measures on bounded convex domains}
\author{Jonas KAHN}
\date{\today}
\begin{document}
\maketitle

\begin{abstract}

The paper bounds the number of tessellations with T-shaped vertices
on a fixed set of $k$ lines: tessellations
are efficiently encoded, and algorithms retrieve them, proving injectivity. 
This yields existence of a completely random T-tessellation, as defined by \citet{KK}, and of its Gibbsian modifications.
The combinatorial bound is sharp, but likely pessimistic in typical cases.






   {{\bf Keywords:} T-tessellations,  Enumerative combinatorics, Polygonal Markov fields, Stochastic geometry}
\end{abstract}

\section{Introduction}
\label{intro}

Some man-made landscapes, such as plots of land, may be viewed as T-tessellations, that is a tessellations of a subset of the plane, where all vertices are degree three and with one flat angle. \citet{KK} have developed a random model aiming at representing such landscapes and real-world structures with similar geometry. This article proves that their \emph{completely random T-tessellation} (CRTT) and its Gibbsian extensions are well-defined.

However, the meat of the article will be a study of the number of tessellations on $k$ given lines. An upper bound is the necessary ingredient to prove existence of CRTT.

\bigskip

T-tessellations first appear as a special case of polygonal Markov fields by \citet{Arak&al}. They have introduced a very general model for random planar graphs, directly defined by their measure on the set of graphs.
The measure depends on an energy function that can be specified to yield T-tessellations, as detailed by \citet{Mackisack&Miles2}. These graphs have very nice mathematical properties. As a result, they may be sampled exactly, without resorting to Metropolis algorithm. 

\citet{Thale} offers a variation on those tessellations, by allowing the Poisson line process on which the segments are built to be inhomogeneous and anisotropic. The resulting tessellation may be sampled exactly in the same direct way. Thäle focuses on existence of the whole plane tessellation, and the statistical properties of the edges and cells of the tessellation.

However these models' nice properties come at a price: the tessellations are necessarily very random, with many elements that behave as if they were independent. For example, the intersection with any line is a Poisson process.

Another category of models
with T-vertices had already been studied \citep{Mackisack&Miles}. They study rectangular tessellations, built  from random point seeds from which segments grow until they are blocked, as in the model by \citet{Gilbert} for mudcracks.  

\cite{Cowan} has introduced a family of models based on successive divisions of cells, including in particular the STIT tessellations by \citet{Nagel&Weiss}. It allows quite some flexibility but can only generate tessellations that can be recursively built by dividing cells, and this a subset of all tessellations.

\citet{KK} have introduced a model of completely random T-tessellations (CRTT). They then take Gibbsian modifications to make it very flexible. The CRTT is characterized by its very simple 
Papangelou kernel, making it similar to a Poisson process. Heuristically, the ratio of probability density between a tessellation $T$ and the same tessellation with an added segment $s$ does not depend on the tessellation, as long as we keep a T-tessellation when $s$ is added.
In particular, all tessellations with the same number of segments have the same density, unlike in the models by \citet{Arak&al} and \citet{Thale}. The model admits (stable) Gibbsian modifications, making it very flexible. Different energies allow different kinds of landscapes. We may for example require that all parcels have a similar area, or penalise sharp angles. The price to pay is harder sampling, requiring Monte-Carlo Markov chain algorithms.

\medskip

From a theoretical point of view, \citet{KK} have not managed to prove that their measure was 
finite
,which is necessary for the model to be well-defined. This article focuses on proving that we have a true probability measure. Therefore, we shall only consider T-tessellations on bounded domains. Existence of the whole plane tessellation is outside the scope of the article.

The results are achieved through purely combinatorial means: we shall define an encoding on tessellations, bound the number of different outputs, and prove we can rebuild the tessellations from the encoding. Existence of the CRTT and its Gibbsian modifications are both quite easy from that point.

\medskip

Section~\ref{submain} contains the minimal notations to state the main result, Theorem~\ref{combi}, and the strategy of proof with a few comments: we bound the number of different T-tessellations on a given configuration of lines by devising algorithms that rebuild the tessellation from their input, and counting the number of different inputs they can have. Section~\ref{secCRTT} gives the motivation: description of the CRTT and its modifications, and  proof that Theorem~\ref{combi} yields their existence. This section may be skipped without hurting understanding of the other parts. Section~\ref{toutesnot} contains the thorough notations for T-tessellations necessary to write, describe and analyse the algorithms.  
The algorithms are detailed in Section~\ref{algos}, together with the less cumbersome proofs. Formal proofs of correctness of the algorithms are delayed to Appendix \ref{proofs}. The algorithms themselves, and figures illustrating them, are found in Appendix~\ref{alg_and_fig}.  Finally we shall discuss the limitations of the proof and hint at possible improvements in Section~\ref{perspectives}.

\section{Main result, strategy and motivation}
\label{res}

\subsection{Main theorem}
\label{submain}

Let $W$ be a compact convex domain in the plane, with non-empty interior. A finite polygonal tessellation of $W$ is a finite partition of $W$ into convex sets with disjoint interiors, called \emph{cells}, such that the boundary of each cell is a union of a finite number of line segments, called (inner) \emph{edges} and parts of the boundary $\boundary$ of $W$ itself. A \emph{segment} is a maximal union of aligned and contiguous inner edges. A \emph{vertex} of a tessellation is T-shaped if it is of degree three and two of the incident edges are aligned, or if it is of degree one on the boundary of $W$. A \emph{T-tessellation} is a polygonal tessellation such that:
\begin{itemize}
\item{All vertices in $W$ are T-shaped.}
\item{No two segments are aligned.}
\end{itemize}

We denote by $\mathcal{T}_W $ the set of T-tessellations on $W$. We denote by $L(T)$, or simply $L$ the set of lines that support the segments of $T$. Conversely, we denote by $\mathcal{T} (L) = \left\{ T: L(T) = L \right\}$  the set of T-tessellations whose segments are supported by $L$, with one segment on each line in $L$.

\bigskip

For a set of $k$ lines $L$, there are \emph{a priori} several different T-tessellations $T$ whose segments are supported by $L$. How many exactly depends on the precise set of lines $L$. The main result of this paper is an upper bound on $\mathcal{N} (k)$ the maximum number of different T-tessellations whose segments are supported on $k$ given lines:
\begin{align*}
    \mathcal{N} (k) = \sup_{L: \#L=k} \#\mathcal{T} (L). 
\end{align*}

\begin{thm}
\label{combi}
For any set of $k$ lines, for any $\epsilon > 0$ the number of T-tessellations built on them is at most:
\begin{align}
\label{count}
\mathcal{N} (k) \leq C^k \left(\frac{k}{(\ln k)^{1-\epsilon }}\right)^{k - k / (\ln k)},
\end{align}
where $C$ depends only on $\epsilon $.
\end{thm}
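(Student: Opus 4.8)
The plan is to encode each T-tessellation $T$ by a description from which Algorithm~\ref{main} reconstructs $T$, and then to bound $\mathcal{N}(k)$ by the number of admissible descriptions. The only expensive ingredient of a description is the list of specified parents; everything else contributes a factor $C^k$, as already observed after the requirements: the leaf/non-leaf labels give $2^k$, the virtual murders $V$ give $4^k$, the other children $O^T$ give $4^k$, and recording membership in $U_0$ costs one further bit per line ($2^k$). So the whole argument reduces to controlling two quantities: how many parents we must specify, and how many candidates each such specification ranges over.

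First I would fix $T$, form its tree of births, and build $U_0$. Write $I$ for the number of internal (non-leaf) nodes. Taking as initial orphans the internal nodes lying in the more populous parity class of generations gives $|U_0^{\mathrm{init}}| \geq I/2$. I then iterate Lemma~\ref{subU}: as long as $P(U_0) \neq T$ we have $D_b(U_0) \neq \emptyset$, and removing the line it provides decreases $\#D_b$ by at least $2$; since $D_b(U_0) \subset U_0$, at most $|U_0^{\mathrm{init}}|/2$ removals bring $\#D_b$ to $0$. By the remark preceding the theorem, $D_b = \emptyset$ forces $P(U_0) = T$, so the terminal $U_0$ both satisfies $P(U_0)=T$ — whence the description determines $T$ and the encoding is injective — and obeys
\[
|U_0| \geq \tfrac12\,|U_0^{\mathrm{init}}| \geq I/4 .
\]
All orphans are internal, so the number of lines whose parent we must specify is $k - |U_0| \leq k - I/4$.

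The decisive saving on the per-parent cost is that a parent is always an internal node (the first requirement forbids leaves from having children), so the leaf labels single out exactly the $I$ admissible parents and each specification ranges over at most $I+1$ values (allowing $\border$). Summing over leaf labelings, over the admissible $U_0$ (at most $2^k$, each with $k-|U_0|\leq k-I/4$), and folding in the $V$ and $O^T$ factors, I obtain
\[
\mathcal{N}(k) \leq C^k \max_{1\leq I\leq k} (I+1)^{\,k - I/4}.
\]
It then remains to maximise $g(I) = (k - I/4)\ln(I+1)$. Its maximum sits near $I^\ast = 4k/\ln k$, where $k - I^\ast/4 = k - k/\ln k$ and $\ln(I^\ast+1) = \ln k - \ln\ln k + O(1)$, so that
\[
g(I^\ast) = \Big(k - \tfrac{k}{\ln k}\Big)\big(\ln k - \ln\ln k\big) + O(k).
\]
Comparing with the logarithm of the target, $(k - k/\ln k)(\ln k - (1-\epsilon)\ln\ln k)$, the difference equals $\epsilon\,(k - k/\ln k)\ln\ln k + O(k)$, which is positive for $k$ large; the exponent $(\ln k)^{-(1-\epsilon)}$ rather than $(\ln k)^{-1}$ is precisely the slack that absorbs the $O(k)$ corrections and the constant $\ln 4$, while finitely many small $k$ are swallowed into $C$.

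I expect the main obstacle to be the clean justification of $|U_0| \geq I/4$: one must verify that iterating Lemma~\ref{subU} is legitimate (that the inclusion $D_b \subset U_0$ persists along the iteration and that termination genuinely yields $P(U_0) = T$ through Lemma~\ref{ends} together with the Late Events Property and the remark), and that the $U_0$ so produced is a valid input satisfying requirement~\ref{binary}. The second delicate point is purely analytic: locating the maximiser of $g$ and checking that, with the $\epsilon$ slack, $g$ stays below the stated bound uniformly in $k$ — elementary, but the place where every constant must be made to fit.
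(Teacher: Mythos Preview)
Your proposal is correct and follows essentially the same route as the paper: choose orphans as the larger parity class of interior nodes in the tree of births, iterate Lemma~\ref{subU} to reach a $U_0$ with $|U_0|\ge I/4$ on which Algorithm~\ref{main} recovers $T$, and then bound the descriptions by $C^k(I+1)^{k-I/4}$ with the remaining maximisation in $I$. The paper parametrises by the number of leaves $z=k-I$ instead of $I$, giving the literally identical expression $(k+1-z)^{(3k+z)/4}$, and carries out the same optimisation; your anticipated obstacles (validity of requirement~\ref{binary} after shrinking $U_0$, persistence of $D_b\subset U_0$) are handled exactly as you suggest.
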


In particular, for all $a > 0 $: 
\begin{align}
\label{Nksup}
\mathcal{N} (k) = o(k^k a^k).
\end{align}

To bound this number $\mathcal{N} (k)$, we want to find a description of a tessellation, that may be used as input to an algorithm. We shall show that the algorithm then rebuilds the initial tessellation $T$. In other words, there is an injective function from the descriptions to the tessellations.  A bound on the number of different descriptions then yields a bound on $\mathcal{N} (k)$. Many of the notations in Section \ref{toutesnot} are also devised to be easy of use within an algorithm.

In particular, multiplying the number of possible descriptions by an exponential makes no difference in proving bound \eqref{Nksup}, given its form. So that we may 
 add to the description any element that takes at most an exponential number of values $b^k$. We say such an element is \emph{free}.

Two typical examples of free elements we shall use are:
\begin{ex}
    \label{ex1}
    a subset of the $k$ lines: there are $2^k$ different ones.
\end{ex}
\begin{ex}
    \label{ex2}
a function\footnote{We use the convention $\mathbb{N} = \left\{ 0, 1, \dots \right\}$.} $f: L \cup \boundary \to \mathbb{N} $ with $\sum_{l\in  L \cup \boundary} f(l) \leq k$. This corresponds to splitting at most $k$ indiscernible objects among the $k$ lines and the boundary. The number of possibilities is $ \sum_{l\leq k} {k + l \choose k} = {2k+1 \choose k} \leq 4^k$.
\end{ex}

\subsection{Motivation: Completely random T-tessellations}
\label{secCRTT}

Let us briefly motivate bound~\eqref{Nksup} and introduce the CRTT model by \citet{KK}.

We first need some definitions related to Poisson line processes.  More details on Poisson line processes and generalisations  may be found in the book by \citet{Schneider&Weil}. 
We may define a line in $\mathbb{R} ^2$ by an angle $\alpha \in [0, \pi[$ and a distance $p \in \mathbb{R} $. Let us consider the origin $O = (0,0)$ and the point $P$ of radial coordinates $(p, \alpha )$. Then the line $D(p, \alpha )$ is the line orthogonal at point $P$ to the line $(OP)$.

        We now consider $D^{-1}\left( \mathcal{L} _{\mathring{W}} \right) $ the preimage by $D$ of the set $\mathcal{L} _{\mathring{W}}$ of all lines in $\mathbb{R} ^2$ that intersect the interior of $W$.
A Poisson line process $\lambda_W$ on $W$ is the image by $D$ of a Poisson point process on $ D^{-1}\left( \mathcal{L} _{\mathring{W}} \right)$. If the point process is simple, then the line process is a random measure of the form $ \sum_{i=1}^N \delta_{L_i}$
where $N\in \mathbb{N} $ is a random variable and all the lines $L_i \in \mathcal{L} _{\mathring{W}} $ are almost surely distinct. So that the line process may be viewed as the law of a random finite set of lines $\left\{ L_i \right\} _{1\leq i \leq N} $ that intersect $\mathring{W}$.

In particular, let us write  $L_W^{\tau}$ for the set of lines with law $\lambda _W^{\tau} $, the image of the Poisson point process on $D^{-1}\left( \mathcal{L} _{\mathring{W}} \right) $  with intensity $\frac{\tau}{\Vol\left(D^{-1}\left( \mathcal{L} _{\mathring{W}} \right)\right)}\Vol$, where $\Vol$ is the Lebesgue measure. Notably, the cardinal of  $L_W^{\tau}$  is a Poisson variable with parameter $\tau$:
\begin{align}
    \label{PoissonCard}
    \mathbb{P}\left[\#L^{\tau}_W = k\right] & = \exp(-\tau) \frac{\tau ^{k}}{k!}.
\end{align}

\bigskip

\citet{KK} define the CRTT by:
\begin{align}
\label{CRTT}
\mu_{CRTT}(A) & = Z^{-1} \mathbb{E}\left[\sum_{T \in \mathcal{T}(L_W^{\tau})} \mathbf{1}_A(T) \right] & \text{for $A\in \sigma (\mathcal{T}_W )$,}
\end{align}
where $Z$ is a normalising constant, the expectation is for $L_W^{\tau}$ with respect to the Poisson line process $\lambda _W^{\tau}$, and $\sigma (\mathcal{T}_W )$ is the standard hitting $\sigma $-algebra on the set of T-tessellations on W \citep[see][]{Matheron}. Intuitively, this means that each T-tessellation has a weight proportional to the weight of the set of its supporting lines $L(T)$ in the Poisson line process.

The bound~\eqref{Nksup} on the number of T-tessellations on $k$ given lines will allow to prove that the normalising constant $Z$ is indeed finite,  and thus that this CRTT was well-defined. The same calculation allows to prove existence of Gibbsian modifications for stable energy functionals, that is for probability measures defined by:
\begin{align}
\label{Gibbs}
\mu_{H}(A) & = Z_H^{-1} \mathbb{E}\left[\sum_{T \in \mathcal{T} (L)} \mathbf{1}_A(T) \exp(-H(T)) \right] & \text{for $A\in \sigma (\mathcal{T} _W)$,}
\end{align}
where $Z_H$ is a normalising constant and 
the energy is bounded from below by a linear function of the number of lines in the tessellation, that is  $  H(T)\geq C\#L(T)$ for some real constant $C$.

As a remark, we could think of using existence in the model of \citet{Arak&al} to get finiteness of $Z$. Indeed, they specialise their model of random planar graphs to T-tessellations. The idea would be to compare $Z$ for a fixed intensity $\tau$, say $1$, to their model  with very high intensity $\tau$, hoping that the density is an upper bound everywhere. However, there is a problem.  The model by \citet{Arak&al}  comes from setting
\begin{align*}
H(T) = |T| \ln 2 + \tau l(T) / b(W) ,
\end{align*}
and normalising afterwards, where $|T|$ is the number of vertices of the tessellation $T$ in the interior of $W$, $l(T)$ is the sum of the lengths of the edges of $T$, and $b(W)$ is the perimeter of $W$. As can be seen, the energy depends on $\tau$, so that the density for tessellations with many (long) edges drops when the intensity $\tau$ increases.

Let us now show how the combinatorial bound~\eqref{Nksup} implies existence of the Gibbs measure \eqref{Gibbs}:

\begin{thm}
\label{exGibbs}
Let $H(T)$ be an energy on $T$ such that $H(T) \geq  C \#L(T)$, for some real $C$ and any tessellation $T$. Then for any expected number of lines  $\tau $ in the reference Poisson line process $\lambda _W^{\tau}$, the Gibbs measure $ \mu_{H}(A) $ is well-defined and finite.
\end{thm}
 
\begin{proof}
    We have to prove that the measure is finite. We denote by $c_i$ any constant. Using the bound on $H(T)$, Stirling formula, equation~\eqref{PoissonCard} and Theorem \ref{combi}, we get:
\begin{align}
    Z_H & =  \mathbb{E}\left[\sum_{T \in \mathcal{T} (L)} \exp(-H(T)) \right] \notag \\
        & =  \int_{\mathcal{L}_W } \sum_{T \in \mathcal{T} (L)}  \exp(-H(T))   \mathrm{d} \lambda^{\tau}_W(L) \notag \\
        & \leq   \int_{\mathcal{L}_W } \exp(-C\#L(T)) \,\,\, \# \mathcal{T} (L) \,\,\,  \mathrm{d} \lambda^{\tau}_W(L) \notag \\       
        &\leq \sum_{k=0}^{\infty}  \exp( - C k) \exp \left( - \tau  \right) \frac{\tau ^k}{k!} \mathcal{N} (k) \label{Z}  \\
&\leq \sum_{k=0}^{\infty} c_1 \frac{c_2^k}{k^k}    \mathcal{N} (k) \notag\\
    &\leq \sum_{k=0}^{\infty}  2^{-k}c_3 \notag \\  
& < \infty.                                 \notag \qedhere
\end{align}
\end{proof}

As a remark, the results would translate effortlessly to any simple anisotropic and inhomogeneous underlying Poisson model, as used by \citet{Thale}. Since we only use  the expected number of lines $\tau$ in the Poisson line process, we do not care about whether the process is homogeneous isotropic or not.

\section{Notations and generalities on T-tessellations}
\label{toutesnot}

Let us have a closer look at T-tessellations.

A T-tessellation $T$ is built on a set of lines $L$ that support its \emph{segments}. Since no two segments are aligned in a T-tessellation, there is a unique segment supported by each line $l \in L$. We shall write $s(l)$ for this segment. 

The endpoints of those segments can only be an intersection with another line, or with the boundary of $W$. So that, knowing $L$, the only places where something can happen are those intersections. We call them \emph{crossroads}. A generic crossroad is denoted by $\crossroad$. When specifying the crossroad, through the lines that intersect, we write $\crossroad(l,m)$, for $l,m \in L$. Conversely, for a given crossroad $\crossroad(l_1,l_2)$, we denote the set of corresponding lines by $l(\crossroad(l_1, l_2)) = (l_1,l_2)$. Naturally the crossroad $\crossroad(l,m)$ is the same as $\crossroad(m,l)$. For parallel lines, we may define the crossroad $\crossroad(l,m)$ as a point infinity, but, apart from technicalities, we are only interested in the crossroads in $W$. A special case is when a line intersects the boundary $\boundary$. Since $W$ is convex, this happens exactly twice, so that $\crossroad(\boundary, l)$ and $\crossroad(l, \boundary)$ are different. Conventionally $\crossroad(\boundary, l) < \crossroad(l, \boundary)$ for the order we define now. We write $\Crossroad $ for the set of crossroads.

Given $k$ lines, we can find their $k(k-1) / 2$ intersections (at most), as well as their $2k$ intersections with the boundary. We choose an axis along which each crossroad has a different coordinate, except for pairs of crossroads that are the same point in the plane. Moreover, we may choose the axis to not be colinear or perpendicular to any line $l\in L$. This axis will be called the \emph{time axis}, or indifferently \emph{abscissas} axis. The corresponding coordinates are called either \emph{times} or \emph{abscissas}. We shall use the usual vocabulary associated to time, such as saying that a point (or a crossroad defined at that point) happens before another if its abscissa is smaller. We also use \emph{left} and \emph{right} for smaller and larger times.

Since the time axis is not perpendicular to any line $L$, each segment's endpoints happen at distinct times. We say that the segment is \emph{born} at its endpoint with lower time, and \emph{dies} at the other. 

Let us consider the  endpoint where the segment $s(l)$ is born. Since all vertices are T-shaped, either it is on the boundary $\boundary$, or it is in the relative interior of another segment $s(m)$. In both cases, the  point belongs to no other segment. We say that $m$ (or $\boundary$ in the former case) is the   \emph{parent} of the segment's line $l$, and the segment's line $l$ is its \emph{child}. Similarly, the  endpoint where the segment $s(l)$ dies belongs either to $\boundary$ or to the relative interior of a single other segment $s(m)$. We say that $m$ (or $\boundary$) is the killer of $l$, and $l$ its victim.


These relations thus give us two trees, the \emph{tree of births} and the \emph{tree of deaths}. Both have $k+1$ nodes, labelled as the boundary and the $k$ lines, and both are rooted at the boundary. For simplicity, we shall always speak of the nodes through their labels, saying ``the parent of a line'' instead of ``the parent of the node labelled by a line'', and so on. The parent of a line in the tree of births is its parent as defined in the former paragraph. Conversely, children in the tree of births are exactly children as defined above. The parent of a line in the tree of deaths is its killer as defined above. Children in the tree of deaths correspond to victims.

We shall from now on assume that all crossroads are distinct. Indeed, this does not change the bound on $\mathcal{N} (k)$:
\begin{lem}
    \label{distinct}
    
    If the crossroads in a set of $k$ lines $L$ are not distinct, then there is a set of $k$ lines $L'$ with distinct crossroads, such that $ \# \mathcal{T} (L) \leq \# \mathcal{T} (L')$. 
\end{lem}

Proof in appendix.

\medskip

With this, we may now order all the crossroads according to time. The ordered list of crossroads will be denoted $\oCrossroad$. The reverse-ordered list will be denoted $\aCrossroad$. And we shall often refer to a crossroad simply as its unique abcissa from now on.

We may denote by $x_b^T(l)$ and $x_d^T(l)$ the times of birth and death of the segment $s(l)$.
The tree of births encodes all information about births, that is on leftmost endpoints of the segments $\{x_b^T(l)\}_{l\in L}$. Symmetrically, the tree of kills encodes all information about deaths, that is on rightmost endpoints of the segments $\{x_d^T(l)\}_{l\in L}$. So that rebuilding the two trees is equivalent to rebuilding the tessellation.

\bigskip

Since the segments describe the tessellation, and for algorithmic purposes,  we now think of a T-tessellation on a set of lines $L$ as a 
couple of functions $T = (x_b^T, x_d^T)$ with $x_{\bullet}^T: L\cup\{\boundary\} \to \mathbb{R} $.

We have added the boundary $\boundary$ into the domain of $x_{\bullet}^T $ to make writing the algorithms easier.
For the same reason, conventionally, we now require that:
\begin{itemize}
\item{$W$ is contained in the band of abscissas $(0,1)$.}
\item{The boundary is ``always alive'': $x_b^T(\boundary) = 0$ and $x_d^T(\boundary) = 1$.}
\end{itemize}

Even when they follow these requirements, not all such couples of functions are a tessellation, let alone a T-tessellation. We shall dub \emph{prototessellation} any such couple. The notion will be mainly useful for initialisation of the algorithms.

A T-tessellation is a prototessellation $P$ with the following three properties:
\begin{itemize}
\item{Segments do not cross:
\begin{align}
\label{nocross}
\forall\, l,m \in L: \lnot &   \left[ x_b^P(l) < \crossroad(l,m) < x_d^P(l) \mbox{ and } x_b^P(m) < \crossroad(l,m) < x_d^P(m) \right]   
\end{align}
}
\item{Segments are born on the relative interior of another segment, or on the boundary:
\begin{align}
\label{born_on_segment}
\mbox{If } x_b^P(l) = \crossroad(l,m), \mbox{ then } x_b^P(m) < \crossroad(l,m) < x_d^P(m).
\end{align}
}
\item{Segments die on the relative interior of another segment, or on the boundary:
\begin{align}
\label{die_on_segment}
\mbox{If } x_d^P(l) = \crossroad(l,m), \mbox{ then } x_b^P(m) < \crossroad(l,m) < x_d^P(m).
\end{align}
}
\end{itemize}

A prototessellation where segments do not cross \eqref{nocross} is a \emph{pretessellation}. We deal with such objects within the algorithm, in some cases as output. We shall usually write $P$ for either a prototessellation or a pretessellation, and $x_b^P$ and $x_d^P$ for the times of birth and death in $P$.

\section{Algorithms and Result}
\label{algos}
\subsection{Preliminary algorithm}
\label{prem}

If we know the tree of births alone, we can almost rebuild the tessellation $T$. We only need the number of murders of each line, which is  free, since it is bounded from above by $4^k$. Hence counting the number of tessellations in the worst case $\mathcal{N} (k)$ is essentially equivalent to counting the highest possible number of trees of births on $k$ lines.

We use this fact to devise a first encoding of tessellations. It only gives finiteness of $Z_H$ for low intensities ($\tau < (4e)^{-1}$ with non-negative energy $H$). However it is a basis of our final encoding, and the proof of its efficiency introduces ideas that we shall use again, while staying in an easier context.

The input of our first algorithm is $(x_b^T, M^T)$, meaning:
\begin{itemize}
\item{We know the whole tree of births $x_b^T$.
    }
\item{We know how many lines are killed by each line: 
  \begin{align}
\label{MT}
M^T&: L  \cup \boundary  \to \mathbb{N} & M^T(l) &= \#\left\{ m: x_d^T(m) = \crossroad(m,l) \right\} .
\end{align}
  }
\end{itemize}


We may now rebuild the tessellation $T$ with Algorithm \ref{algo1}. The process is illustrated by Figure~\ref{fig_algo1}. Both are given in Appendix~\ref{alg_and_fig}.

Informally, we move along the abscissas axis, while prolongating the segments that are alive. We know when each segment is born. So, we add them to the living segments at their time of birth. When two segments cross, we look at their remaining number of murders. One of the two must be zero. The corresponding segment is killed. The other segment's number of murders is decreased by one. When a segment hits the boundary, it is also killed. When we attain the rightmost point of $W$, the tessellation is complete.

\begin{lem}
\label{lemalgo1}
Algorithm \ref{algo1} yields $T$.
\end{lem}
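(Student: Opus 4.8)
The plan is to use that the input already pins down the births correctly, $x_b = x_b^T$, and that Algorithm~\ref{algo1} never touches $x_b$; hence it suffices to prove that on termination $x_d = x_d^T$. Since the main loop visits the events of $\oEvent$ in increasing time, I would argue by induction on the processed events, carrying an invariant $I(n)$ that bundles the death times together with the counters. After the first $n$ events $\event_1 < \dots < \event_n$, at times $t_1 < \dots < t_n$, have been treated, $I(n)$ asserts:
\begin{itemize}
\item every line $l$ with $x_d^T(l) \le t_n$ has already been correctly killed, $x_d(l) = x_d^T(l)$;
\item every line $l$ with $x_d^T(l) > t_n$ is still flagged alive, $x_d(l) = 1$;
\item for every line $l$, the counter equals $M(l) = \#\{ m : x_d^T(m) = \event(m,l) \text{ and } x_d^T(m) > t_n \}$, the number of murders $l$ still has to commit strictly after $t_n$ (with $M(\border)=\infty$ throughout).
\end{itemize}
The base case is the post-initialisation state, where all $x_d(l)=1$ and $M = M^T$ counts every murder, each occurring at a positive time.

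For the inductive step I would process $\event_n = \event(l_1,l_2)$ and read off from $T$ what happens at this intersection. Because $T$ is a T-tessellation, properties \eqref{nocross}--\eqref{die_on_segment} force that at most one of the two segments has an endpoint at $\event_n$, and if so it lies in the interior of the other. If $\event_n$ is a birth, or is simply not reached by one of the two segments, I would check that the guard $x_b(l_1),x_b(l_2) < \event_n < x_d(l_1),x_d(l_2)$ fails: a birth gives $x_b(l_i)=\event_n$, a not-yet-born segment gives $x_b(l_i)>\event_n$, and an already-dead segment has, by the second clause becoming first clause of the invariant, $x_d(l_i)=x_d^T(l_i)<\event_n$. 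So the algorithm does nothing, no murder is committed at $t_n$, and $I(n)$ follows from $I(n-1)$ after moving the threshold.

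The remaining case is a genuine death, say $l_i$ dying on $l_j$ with $\{i,j\}=\{1,2\}$. Here the guard passes, since the victim is still flagged alive (its death $x_d^T(l_i)=t_n$ has not yet been processed) and the killer is alive because, by \eqref{die_on_segment}, $x_b^T(l_j)<\event_n<x_d^T(l_j)$. The crux is to show the inner test selects the victim, via two facts: (a) $M(l_i)=0$ and (b) $M(l_j)\ge 1$ at this instant. Fact (a) holds because every murder by $l_i$ occurs strictly before its own death (a killer must be alive at the kill), hence at a time $\le t_{n-1}$, so by the counter clause of $I(n-1)$ they have all been decremented; fact (b) holds because the current event is itself a murder by $l_j$ at time $t_n>t_{n-1}$ and is thus still counted. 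Therefore exactly one of $M(l_1),M(l_2)$ is zero -- the victim's -- so the test $M(l_1)=0$ routes control to the branch that sets $x_d$ of the victim to $\event_n=x_d^T(\text{victim})$ and decrements the killer, re-establishing all three clauses of $I(n)$; the convention $M(\border)=\infty$ makes border deaths fall under the same analysis with the border never chosen as victim. Evaluating $I$ at the last event yields $x_d=x_d^T$, and with $x_b=x_b^T$ this gives $P=T$.

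The main obstacle, and the reason the invariant must couple the death times with the counters rather than treat them separately, is that the guard tests the \emph{running} values $x_d(l_i)$, not the true ones: one must simultaneously rule out that an already-dead or not-yet-reached segment is spuriously perceived as crossing -- guaranteed by the first two clauses -- and ensure the counter hits $0$ exactly at the victim's death, which rests on the structural fact that a segment commits all its murders before it dies. Verifying that these two threads stay synchronised across the single time-ordered pass is the delicate part; the geometric case distinction itself is routine once \eqref{nocross}--\eqref{die_on_segment} are invoked.
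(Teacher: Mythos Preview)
Your proof is correct and follows essentially the same route as the paper's: both argue by induction over the time-ordered events, carrying as invariant that the running counter $M(l)$ equals the number of murders $l$ still has to commit after the current time, and that all deaths up to the current time have been set correctly. Your three-clause invariant merely makes explicit the ``still alive'' clause $x_d(l)=1$ for $x_d^T(l)>t_n$, which the paper leaves implicit under the blanket overestimate $x_d^P(l)\ge x_d^T(l)$; the case analysis (guard fails unless a true death occurs, victim has counter zero, killer has counter positive) is the same in both.
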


Proof in appendix.

Since $M^T$ is a function from $L \cup \boundary$ to $\mathbb{N} $ with $\sum_{l\in L \cup \boundary} M^T(l) \leq k$, by Example \ref{ex2}, there are at most $4^k$ possibilities for $M^T$. Since the parent of any line is another line or the boundary, there are at most $k^k$ different possible trees of birth. So that the former lemma yields $\mathcal{N} (k) \leq (4k)^k$. Putting that back into bound \eqref{Z} would yield a convergent series if $\tau < 1/(4e)$ and $C\geq 0$.

\subsection{Main algorithm}
\label{sec_main}

In the following, adjectives like ``true'' or ``real'' will always mean ``in the tessellation $T$ to be retrieved''. 

The previous encoding still uses too much information for proving existence of the CRTT with high intensity. Specifically, describing the whole tree of births dooms the effort.

Next algorithm rebuilds $T$ while knowing only part of the times of birth. The price to pay is higher complexity: instead of one pass on crossroads, we have to loop back and forth in time, prolongating orphan segments to their birth, and cutting too old segments, until stabilisation.

Let us be precise. The algorithm will take as input $(U_0, x_b^P, O^T, V)$ satisfying a list of \emph{requirements}. Specifically:
\begin{enumerate}
    \item{There is a set of \emph{orphan} lines whose parents we will not give as input. This is $U_0 \subset L$. If we do not know a line's parent, then it must have at least one child, and we must know its first child:
\begin{align}
\label{binary}
(l \in U_0) \implies \left( C(l) \hat{=} \left\{ m \in L: x_b^T(m) = \crossroad(l,m) \right\} \neq \emptyset \mbox{ and } \arginf_{m\in C(l)} x_b^T(m) \not\in U_0 \right) .
\end{align}
}
\item{The parents of the non-orphan lines are known. That is $x_b^P(l) = x_b^T(l) $ for all lines in $L\setminus U_0$. The function $x_b^P$ is otherwise undefined at input.}
\item{We know the number of \emph{orphan children} each line has in the true tessellation:
    \begin{align}
\label{Others}
O^T &: L  \cup \boundary  \to \mathbb{N}, & O^T(l) &= \#  \left\{    m \in U_0: x_b^T(m) = \crossroad(l,m)\right\}.
\end{align}
}
\item{We know the number of murders of each line in a specific pretessellation associated to the tessellation $T$. These \emph{virtual murders} are given by $V:L  \cup \boundary \to \mathbb{N} $, made precise below.}
\end{enumerate}

A quick look at the input shows that $U_0$ is a subset of the lines, and $O^T$ and $V$ are functions from $L  \cup \boundary $ to $\mathbb{N} $ with sum of all images at most $k$, so that by Examples \ref{ex1} and \ref{ex2}, they are free. On the other hand, we shall need to find a few more constraints on $x_b^P$ to prove Theorem \ref{combi}.

\bigskip

To define virtual murders $V$, we first describe a pretessellation $P(U_0)$. We shall show later on that this is the pretessellation yielded by the algorithm  after its initialisation phase:
\begin{align}
x_b^{P(U_0)}(l) & = x_b^T(l) & \mbox{if $l \not \in U_0$.} \label{birth_nU0} \\
x_b^{P(U_0)}(l) & = \inf\left\{\crossroad(l,m): \crossroad(l,m) = x_b^T(m) \right\} & \mbox{if $l \in U_0$.} \label{birthU0} \\
\label{deathU0}
x_d^{P(U_0)}(l) & = \inf \left\{ \crossroad(l,m): x_d^{P(U_0)}(m) \geq \crossroad(l,m) \geq  x_d^T(l), x_b^{P(U_0)}(m) \right\} & \mbox{for all $l \in L$.}
\end{align}
Notice that the times of death are well-defined: we list crossroads in timewise order. At each crossroad, we know if the involved lines are already dead in $P(U_0)$, and thus if the condition in the infimum is met. Furthermore, the time of death ensures that there is no crossing \eqref{nocross}, so that $P(U_0)$ is a pretessellation.

The number of virtual murders is simply the number of kills in $P(U_0)$, given that simultaneous deaths do not count:
\begin{align}
\label{Vl}
V(l) & = \# \left\{ m \in L : \crossroad(l,m) = x_d^{P(U_0)}(m) < x_d^{P(U_0)}(l) \right\} .
\end{align}

Intuitively, the \emph{virtual murders} are chosen so that, during initialisation of Algorithm \ref{main}, the lines are killed as soon as possible after their true deaths. 

\bigskip

Algorithm~\ref{main} and its subroutines Algorithms~\ref{parent} and~\ref{cuts} are in Appendix~\ref{alg_and_fig}, together with three-page long Figure~\ref{rec2}, which illustrates the process.

In the algorithm,
we want to find the parents of the orphan lines. The variable $U$ will contain the lines whose
parents we are sure we do not know yet.

Informally, we first initialise Algorithm~\ref{main} by moving along the time axis, while prolongating the segments that are deemed alive. Namely either the segment is an orphan, and we prolongate it when it has a known child, or we know its time of birth, and we prolongate it from that time. When two segments cross, we stop each one if its number of virtual murders is zero. Both may be stopped at the same time, and at least one must be. If a segment is not stopped, its number virtual of murders is decreased by one. When a segment hits the boundary, it is killed. End of initialisation.

Now we loop. Each iteration consists of an extension pass backwards in time, and a cutting loop, forwards in time. 

The pass backwards in time, or \emph{parent-seeking loop} is given in Algorithm~\ref{parent}. During the pass backwards in time, we extend the segments whose parent we do not know. Since these segments are extended before their first child \eqref{binary} in the real tessellation $T$, they never cross each other in the process. We stop extending backwards a segment when it hits another segment.


The \emph{cutting loop} is given in Algorithm~\ref{cuts}. During the cutting loop, we cut the segments who have too many children. Namely, we count the number of orphan (lines in $U_0$) children a line has, and cut when we reach its number of \emph{orphan children}.
The consequence of this operation is that its other orphan children will be further extended in the next loop iteration. 

End of loop. Stop when all numbers of orphan children are zero. End of algorithm.

\bigskip


Let us highlight a few key points about Algorithm \ref{main}:
\begin{prop}
    \label{facts}
    After initialisation, and throughout the algorithm, the prototessellation $P$ satisfies the following properties:
\begin{itemize}
\item{It is a pretessellation: segments do not cross \eqref{nocross}.}
\item{Deaths are late: 
        \begin{align}
        \label{late_death}
        x_d^P(l) & \geq x_d^T(l) & \mbox{for all $l \in L$}.
    \end{align}
}    
\item{Births are late: 
   \begin{align}
        \label{late_birth}
        x_b^P(l) & \geq x_b^T(l) & \mbox{for all $l \in L$}.
    \end{align}
  }
\item{If  we are sure we do not know yet the parent of a line, its time of birth is strictly overestimated: 
       \begin{align}
        \label{late_unknown}
        x_b^P(l) & > x_b^T(l) & \mbox{for all $l \in U$}.
    \end{align}
}
\item{Times of birth are lower than the true time of death: 
    \begin{align}
    \label{superposition}
    x_b^P(l) & < x_d^T(l) & \mbox{for all $l \in L$.}
\end{align}
}
\item{If a line $l$ has a child $m$ before its true death, then it is really its child:
    \begin{align}
    \label{realchild}
    \left(x_b^P(l) < x_b^P(m) = \crossroad(l,m) < x_d^T(l)\right) \implies \left( x_b^T(m) = \crossroad(l,m) \right). 
\end{align}
}    
    \end{itemize}
Moreover:
\begin{itemize}
    \item{Birth and death time are decreasing after preinitialisation (stage~\ref{end_preinitialisation}): 
            \begin{align}
        \notag                     & \text{\emph{If we hit a stage of the form }} x_{\bullet}(l) \gets \crossroad, \\ 
        \label{decrease}           & \text{\emph{then }}x_{\bullet}(l) > \crossroad  \text{\emph{ just before}}. 
        \end{align}
     }
    \item{At the end of the parent-seeking loop (Algorithm~\ref{parent}), all segments are born on the relative interior of another segment, or on the boundary~\eqref{born_on_segment}.}        
\end{itemize}

\end{prop}

Proof in appendix.

\bigskip

These properties ensure that the algorithm ends, and that we end up with a pretessellation similar to the true tessellation $T$:

\begin{lem}
\label{ends}
With input satisfying the requirements given at the beginning of the section, Algorithm~\ref{main} ends.

Its output is a pretessellation $P_o$ with  late births~\eqref{late_birth} and late deaths~\eqref{late_death}, with births occurring on the relint of segments or on the boundary~\eqref{born_on_segment}, and before the true death times~\eqref{superposition}. Moreover, each line has the same number of children as in the real tessellation~$T$. The children it has before its true death are real children \eqref{realchild}. In particular, if the line has the same time of death as in $T$, then its children are the same as in $T$.
\end{lem}

\begin{proof}
    In Algorithm \ref{cuts}, stages \ref{current_birth} to \ref{finsym1} and stages \ref{current_birth2} to \ref{finsym2} are symmetric. To make writing easier, we shall always assume we are between stages  \ref{current_birth} and \ref{finsym1} when anything relevant happens there.

    Let us first assume that we hit stage \ref{cuts1} in Algorithm \ref{cuts}. Then we have hit stage \ref{Cdeath1}. By Property \ref{facts}, the time of death $x_d(l_2)$ has decreased. Since the times of death are bounded from below \eqref{late_death} and may take only a finite number of values, that of crossroads, we shall hit stage \ref{cuts1} only a finite number of times.

    Now, if we do not hit stage \ref{cuts1}, the variable $Cuts$ stays at zero at the end of the cutting loop, so that Algorithm \ref{main} ends.

    Moreover, in this case,  we do not hit \ref{Cdeath1}, so there is no change to the pretessellation during the last cutting loop. So that the final pretessellation $P_o$ is the same as the one at the end of the last parent-seeking loop. So that by Property \ref{facts}, $P_o$  is a pretessellation~\eqref{nocross} with late births~\eqref{late_birth} and late deaths~\eqref{late_death}, with births occurring on the relint of segments or on the boundary~\eqref{born_on_segment}, and before the true death times~\eqref{superposition}. The births that happen before the real death of the parent are real \eqref{realchild}.
    
    Furthermore, each line in $P_o$ has the same number  of children as in $T$: if a line $l$ had more, it would pass the conditional stage \ref{current_birth} in Algorithm \ref{cuts} at least $(O^T(l) + 1)$ times, and thus pass stage \ref{trop} and hit stage \ref{cuts1}. On the other hand, the total number of children of lines in $P_o$ is at least as much as in $T$: each line is born exactly once, and if it is born on the boundary in $P_o$, it is also in $T$ since times of birth are overestimated.
    
    Finally, if a line $l$ has the same time of death as in $T$, then all its children are born before its true time of death, so that they are all real children. Moreover $l$ has the right number of children, so it has all its true children.
\end{proof}

The lemma states that the algorithm ends, but not that we have the real tessellation. I confess that I do not know whether we may have pathological situations where the same input satisfies the requirements with respect to several different pretessellations.
However, we now circumvent the difficulty by carefully choosing the set $U_0$ of orphan lines.

For a given set $U_0$ satisfying requirement~\eqref{binary}, we may write $P_o$ or $P_o(U_0)$ for the output pretessellation. We also call $D_b(U_0)$ the set of lines with wrong times of birth, that is $x_b^{P_o}(l) \neq x_b^T(l)$. Obviously $D_b(U_0) \subset U_0$.

Now, if we choose the right line and give its birth time, then there will be at least two less lines in $D_b$: that one and another. 
Formally:

\begin{lem}
\label{subU}
Let $U_0$, $P_o$ and $D_b(U_0)$ defined as above.

Then there is a line $l$ such that 
\begin{align}
\label{reduc}
\# D_b(U_0 \backslash \left\{ l \right\} ) \leq \# D_b(U_0) - 2.
\end{align}
\end{lem}

\begin{proof}
\begin{figure}
\subfloat[][]{
             \label{rustine1}
             \includegraphics[width=.49\textwidth]{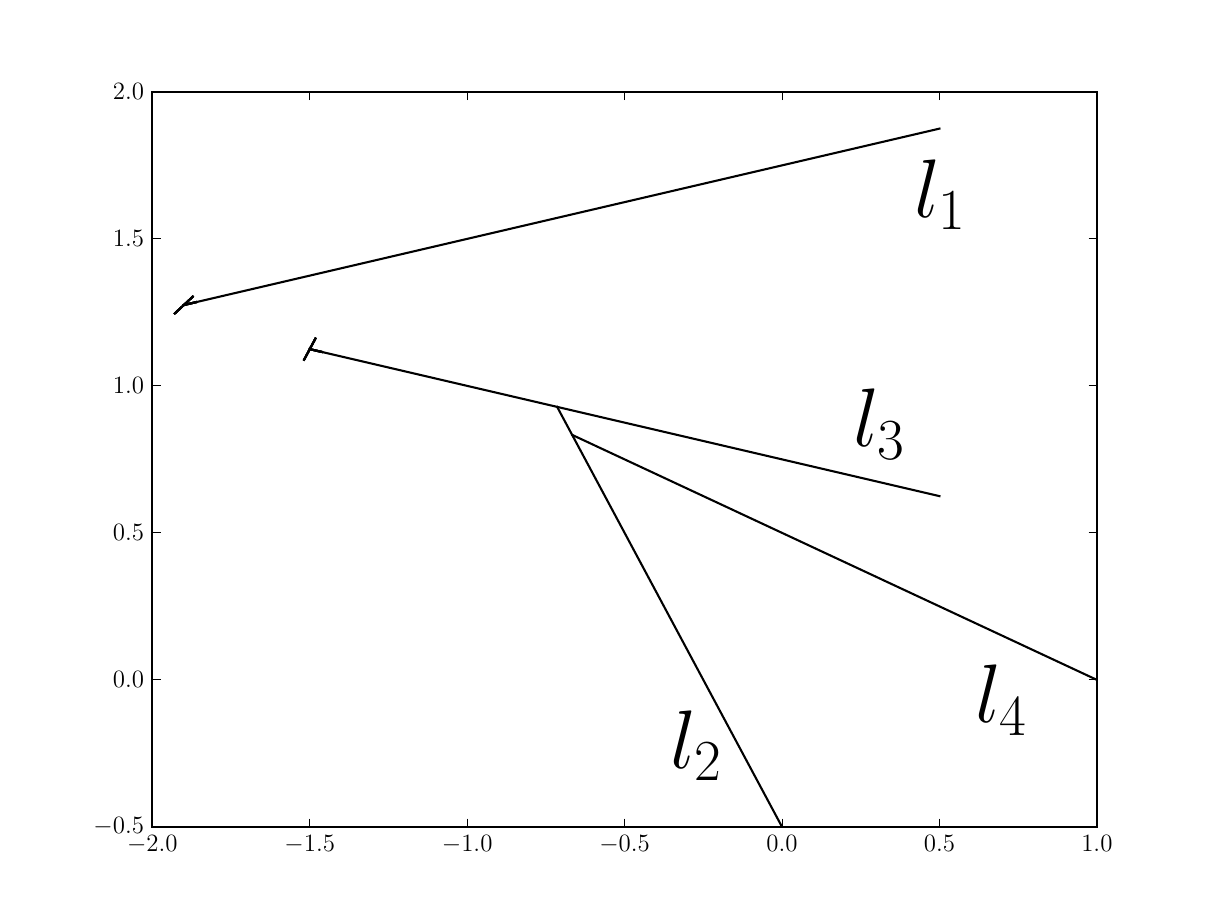}
             }
\subfloat[][]{
             \label{rustine2}
             \includegraphics[width=.49\textwidth]{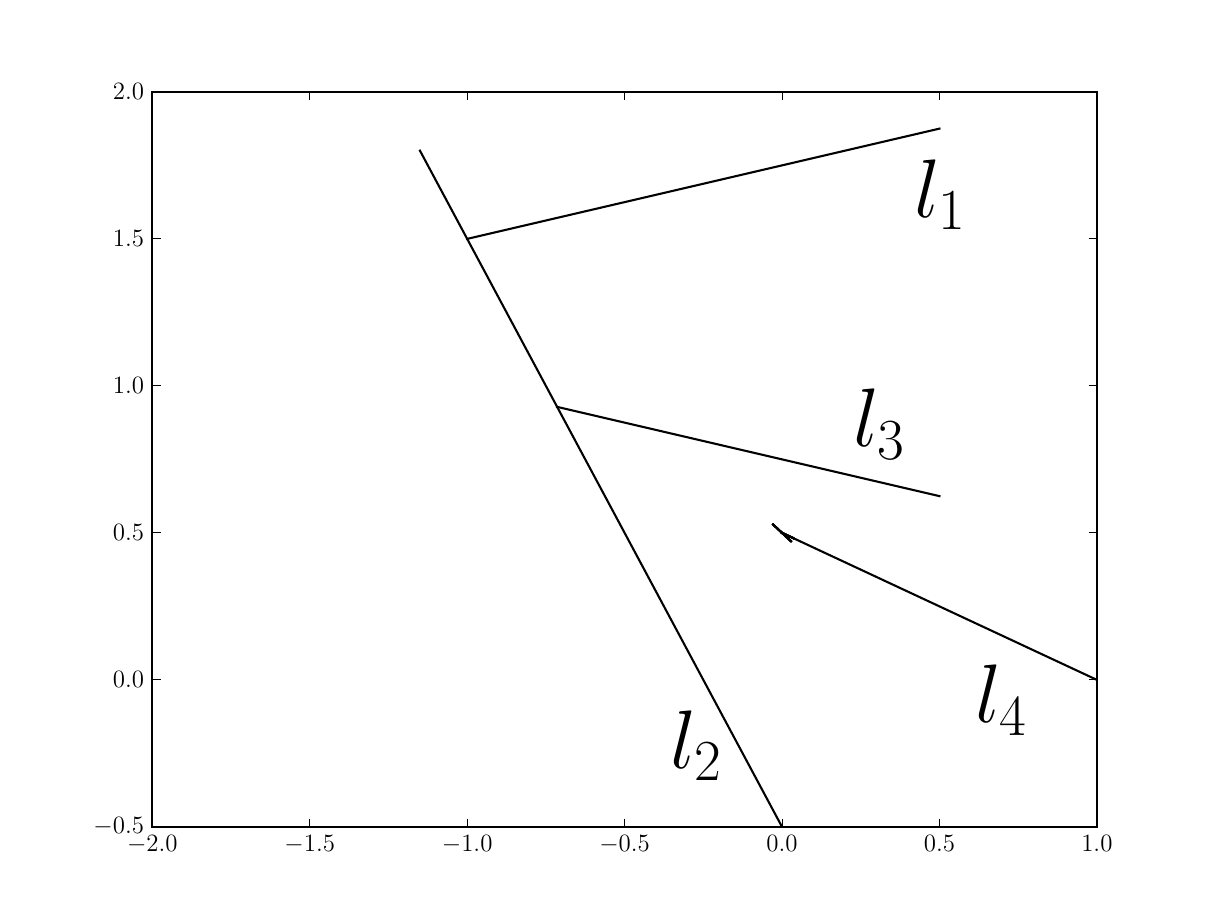}
             }\\
\subfloat[][]{
             \label{rustine3}
             \includegraphics[width=.49\textwidth]{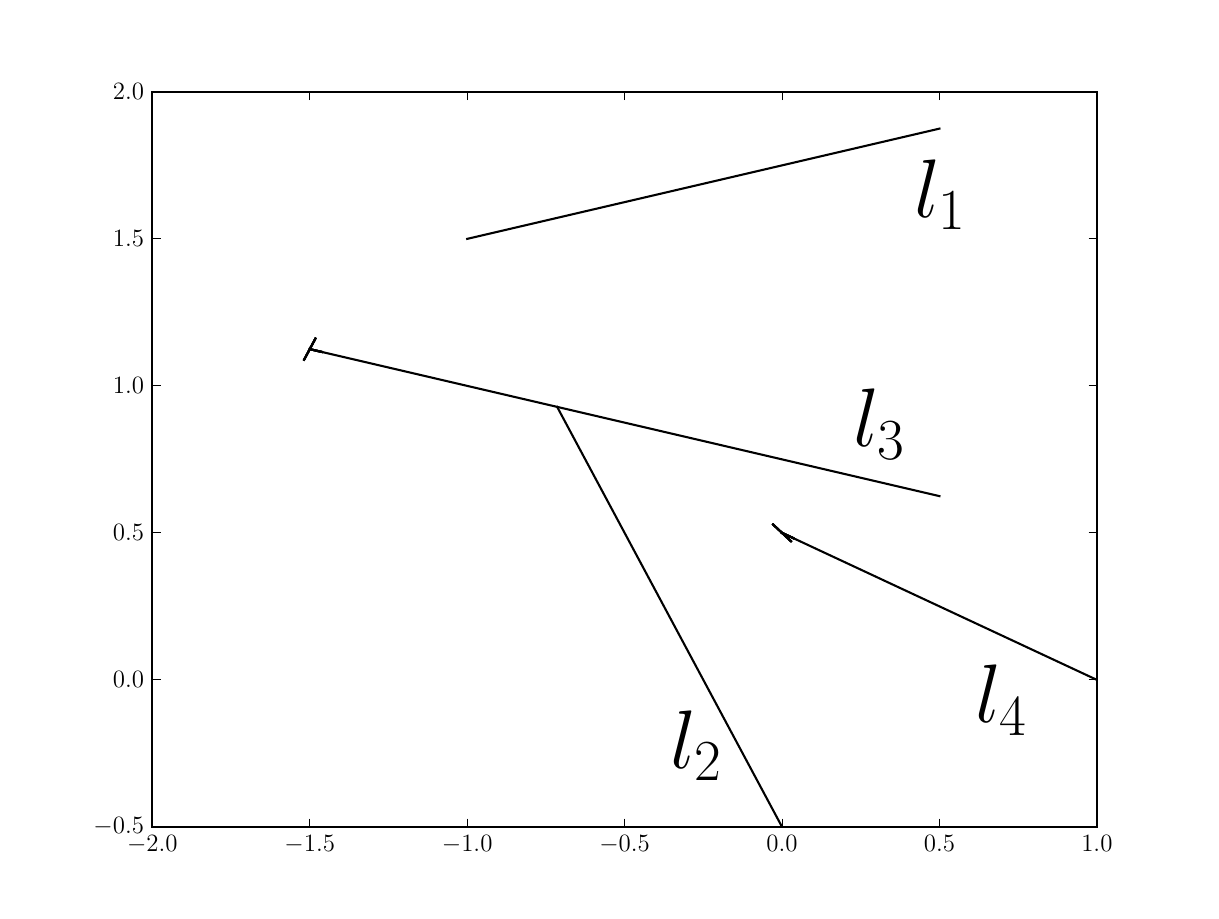}
             }
\subfloat[][]{
             \label{rustine4}
             \includegraphics[width=.49\textwidth]{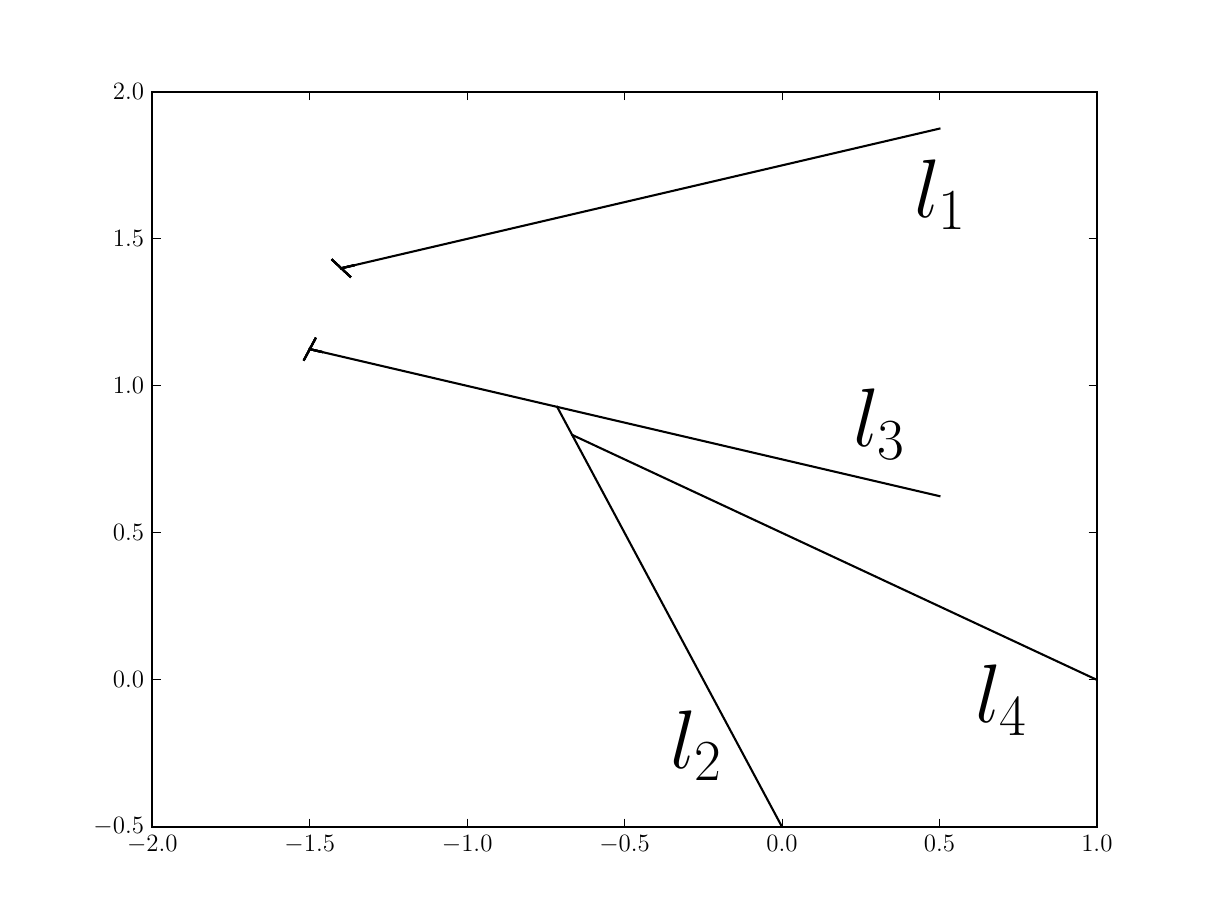}
             }
             \caption{The real segments on the four lines are sketched in \protect\subref{rustine1}. Suppose that at the end of Algorithm \ref{main}, we instead obtain \protect\subref{rustine2}, with three wrong birth times for lines $l_1$, $l_3$, and $l_4$. If we now rerun the algorithm with the  birth time of $l_3$ specified \protect\subref{rustine3}, the line $l_2$ will need another child and the birth time of $l_4$ will be set right. Notice that $l_1$ is also born sooner, but not necessarily soon enough yet \protect\subref{rustine4}. So that we correct at least two birth times by specifying a single good one.}
\label{rustine}
\end{figure}
A first remark is that for any subset $U_1$ of $U_0$, Lemma \ref{ends} holds with $P_o(U_1)$ instead of $T$ and $P_o(U_0)$ instead of $P_o$. Indeed there would be no change when running Algorithm~\ref{main} with $U_0$ as input if $P_o(U_1)$ was the real tessellation. So that $x_b^{P(U_0)}(l) \geq x_b^{P(U_1)}(l) \geq x_b^T(l)$ for all lines. This implies that $D_b(U_1)$ is a subset of $D_b(U_0)$.

Figure \ref{rustine} illustrates how we choose which line to remove from $U_0$.

We consider $l_1 \in D_b(U_0)$ and its fake parent $l_2$, so that $x_b^{P_o}(l_1) = \crossroad(l_2, l_1)$. Since births that happen before the real death of the parent are real \eqref{realchild}, we know that   $\crossroad(l_2, l_1) > x_d^T(l_2)$.
The line $l_2$ is killed in the real tessellation by a line $l_3$, that is $x_d^T(l_2) = \crossroad(l_2, l_3)$. Notice that $l_3$ and $l_1$ may be the same line. We shall correct the time of birth of this line $l_3$, the line $l$ in the lemma.

This notably entails that $l_1$ is no longer a child of $l_2$ in  $P( (U_0 \backslash \left\{ l_3 \right\} )$. But the number of children of $l_2$ at the end of the algorithm is fixed, equal to that in the real tessellation $T$. So that there is a line $l_4$ that is a child of $l_2$ in $P( (U_0 \backslash \left\{ l_3 \right\} )$ and was not in $P(U_0)$. Since moreover $l_2$ is killed by the right line in $P (U_0 \backslash \left\{ l_3 \right\} )$, we know that $l_4 \in D_b(U_0)$ and $l_4 \not \in D_b(U_0 \backslash \left\{ l_3 \right\})$.

Finally, since $D_b(U_0 \backslash \left\{ l_3 \right\} ) \subset D_b(U_0)$, we may write:
\begin{align*}
\# D_b(U_0 \backslash \left\{ l \right\} ) & \leq \# D_b(U_0) - \#\left\{ l_3, l_4 \right\} 
=  \# D_b(U_0) - 2.
\end{align*}
\end{proof}

We may now state and prove:
\begin{thm}
    \label{final}
    There is an input $(U_0, x_b^P, O^T, V)$ such that:
\begin{itemize}
    \item{Output of Algorithm \ref{main} is $T$.}
\item{The set of orphan lines is at least as big as one fourth of the number of internal nodes of the tree of births, except the boundary:
    \begin{align*}
        y & \hat{=}  \#\! \left\{ m \in L: \exists l \in L, x_b^T(l) = \crossroad(l,m)   \right\} ,\\
        \# U_0 & \geq \frac{y}4 .   
\end{align*}
}
\end{itemize}

    This entails Theorem \ref{combi}.
\end{thm}

\begin{proof}
We first build a big set of orphan lines $U_0$ satisfying requirement~\eqref{binary}, then use Lemma \ref{reduc} to build the $U_0$ promised in the theorem.

Start by  looking at the tree of births. We define $U_0$ as the set of inner nodes of even or odd generations, except the root, whichever is the biggest. This ensures that $U_0$ has cardinal at least $u_0 = y/2$. Moreover, since each line in $U_0$ is an interior node in the tree of lifes, it has at least a child. Since the children's generations have opposite parity to their parents', none is in $U_0$, and requirement~\eqref{binary} is satisfied.

We may then run the algorithm and find an output pretessellation $P_o(U_0)$.  It might not be the true tessellation, as there might be a set of lines $D_b(U_0)$ whose parent is wrong. This set is included in $U_0$. We then remove a line from $U_0$ as in Lemma \ref{subU} and run the algorithm again. And we iterate until we obtain the true tessellation. Since $D_b(U_0)$ is at least two elements smaller at each step, we have to remove at most $u_0/2$ lines from our initial $U_0$ to get a set $U_0$ that we may use as input in Algorithm~\ref{main} to obtain a pretessellation $P_o$ with all birth times right, that is $D_b(U_0) = \emptyset$. So that the cardinal of the final $U_0$ is at least $u = y / 4$.

This set of orphan lines yields the true tessellation $T$. Indeed, since all the times of birth are right  and $P_o$ has late deaths \eqref{late_death},  the segments of $P_o$ contain those of $T$. Since $P_o$ is a pretessellation, segments do not cross, hence the times of death cannot be later than in the real tessellation $T$.

\medskip

We now prove Theorem \ref{combi}.

Let us fix the set of interior nodes of the tree of births, except the boundary. Since it is a subset of $L$ of cardinal $k$, by Example \ref{ex1}, there are at most $2^k$ possibilities.

With this set fixed, we now bound the number of different values each element of the input may take, while also satisfying the requirements in Theorem \ref{final}:
\begin{itemize}
    \item{$U_0$ is a subset of a set with $k$ elements, so by Example \ref{ex1}, there are at most $2^k$ different possible $U_0$.}
    \item{$O^T$ and $V$ are functions $f$ from a set with $k+1$ elements to the natural numbers, such that $\sum_l f(l) \leq k$, so by Example \ref{ex2}, there are at most $4^k$ possibilities for each.}
    \item{$x_b^P$ is a function from $L \setminus U_0$, where each image is from a set of cardinal $y+1$. Indeed $x_b^P(l) = \crossroad(l,m)$ for $m$ the parent of $l$, so that $m$ is either the boundary or an interior node of the tree of births. Hence there are at most $(1 + y)^{k - y / 4}  $ different possible $x_b^P$.}
\end{itemize}

Thus we may give the following upper bound on the number of different T-tessellations on $k$ given lines, using $C$ for any  constant:

\begin{align*}
\mathcal{N} (k) & \leq C^k \sup_{0 \leq y \leq k-1}  (1 + y)^{k - y / 4} \\
& \leq C^k \left(\frac{k}{(\ln k)^{1-\epsilon }}\right)^{k - k / (\ln k)},
\end{align*}
where we have used the following bound on the supremum in the right-hand side: take the derivative in $y$ of the logarithm, and we see that the maximum is attained when
\[
(1 + y) (1 + \ln(1 + y)) = 4 k - 1.
\]
For big $k$, this implies $k / \ln(k)^{1 - \epsilon} \geq 1 + y \geq 4k / \ln(k)$. We then replace by the right bounds in the exponent and the basis.
\end{proof}

\section{Optimality remarks and perspectives}
\label{perspectives}

Though we have used very violent upper bounds at times, there is no way to get a substantially better combinatorial result. Indeed let us consider the following $k$ lines on a square domain $[0,1]^2$, for some integer $a \leq k$:
\begin{align*}
y & = \frac{\lambda}{a+1}        & \mbox{for $\lambda \in [1, a]$} \\
x & = \frac{\lambda }{k - a + 1} & \mbox{for $\lambda \in [1, k -a ]$.} 
\end{align*}
How many different T-tessellations can we build on those lines? A lower bound is given by supposing that all horizontal segments are maximal, that is have both endpoints on the boundary. Then each of the vertical segments is between two consecutive horizontal lines, and hence of length $1/(k-a+1)$. More significantly, this means each one can be at $(k - a + 1)$ different places, independently from each other since the vertical lines do not cross. So that there are at least $(k-a+1)^a$ different T-tessellations that can be built on those lines. If we take $a = k - k / (\ln k)$, we may conclude:
\begin{lem}
\label{upper}
There are sets of $k$ lines such that the number of T-tessellations on those lines admits the following lower bound:
\begin{align*}
\mathcal{N} (k) \geq \left(  \frac{k}{\ln k}  \right) ^{k - k / (\ln k)}
\end{align*}
\end{lem}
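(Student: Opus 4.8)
The plan is to prove this lower bound by exhibiting a single explicit configuration of $k$ lines that supports very many distinct T-tessellations; since $\mathcal{N}(k) = \sup_{L:\#L=k}\#\{T:L(T)=L\}$ is a supremum over all line sets, one good configuration suffices. I would take the grid-like family already displayed on $W = [0,1]^2$: the $a$ horizontal lines $y = \lambda/(a+1)$ for $\lambda = 1,\dots,a$, together with the $k-a$ vertical lines $x = \mu/(k-a+1)$ for $\mu = 1,\dots,k-a$, leaving the integer $a$ to be optimised at the end. The whole point is that this arrangement decouples into a rigid skeleton plus independent free choices.

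First I would fix a rigid skeleton by declaring every vertical segment \emph{maximal}, i.e. running from the bottom border of $W$ to the top border. These $k-a$ maximal vertical segments cut $W$ into $k-a+1$ vertical strips of width $1/(k-a+1)$. For each of the $a$ horizontal lines I then let its unique segment occupy exactly one strip, spanning from one vertical line (or the left border) to the adjacent vertical line (or the right border). The first thing to verify is that \emph{every} such assignment is a genuine element of $\mathcal{T}_W$ built on exactly these $k$ lines: each inner endpoint of a horizontal segment lands in the interior of a maximal vertical segment, producing a degree-three vertex whose vertical bar is flat and whose horizontal stem terminates, i.e. a T-vertex; endpoints lying on the border of $W$ are unconstrained; and no two segments are aligned, since the horizontal lines are pairwise distinct and so are the vertical ones. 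The segments together with $\partial W$ do partition $W$ into polygonal cells, so the configuration is an admissible T-tessellation.

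The counting step is the independence observation. The strip chosen for one horizontal segment places no constraint whatsoever on the strip chosen for another, because distinct horizontal lines never meet, so the $a$ choices factorise; moreover two assignments that differ in even one strip yield different segment sets and hence different tessellations. This produces $(k-a+1)^a$ pairwise distinct T-tessellations on the fixed line set, giving $\mathcal{N}(k) \geq (k-a+1)^a$. Finally I would optimise over $a$: taking $a = \lceil k - k/\ln k\rceil$ yields $k-a+1 = \lfloor k/\ln k\rfloor + 1$, so that the base satisfies $k-a+1 \geq k/\ln k \geq 1$ while the exponent satisfies $a \geq k - k/\ln k$; since the base is at least $1$, enlarging the exponent only enlarges the product, and $(k-a+1)^a \geq (k/\ln k)^{\,k - k/\ln k}$, which is exactly the claimed bound.

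I expect the only genuinely delicate point to be the validity check, namely confirming that fixing the vertical segments maximal stays compatible with the T-tessellation axioms at every one of the many T-vertices strung along a single vertical bar, and simultaneously that the rounded value of $a$ keeps both the base $\geq k/\ln k$ and the exponent $\geq k - k/\ln k$. Both are routine once stated carefully: the vertical bar plays the role of a flat through-line at each incidence, and $\lfloor x\rfloor + 1 > x$ handles the base while $\lceil x\rceil \geq x$ handles the exponent. The substance of the argument is the geometric construction and the factorisation of the free choices, and neither of these presents a real obstacle; the bound is essentially a direct count.
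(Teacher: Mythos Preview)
Your proof is correct and follows exactly the paper's construction and count: the grid of $a$ horizontal and $k-a$ vertical lines on $[0,1]^2$, one family taken maximal so that each segment of the other family may independently occupy any of the resulting strips, giving at least $(k-a+1)^a$ T-tessellations and hence the stated bound with $a = k - k/\ln k$. Your choice of making the \emph{vertical} segments maximal is in fact the one consistent with the paper's displayed count $(k-a+1)^a$; the paper's surrounding prose interchanges the words ``horizontal'' and ``vertical'', so your write-up is the cleaner of the two.
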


If we want to get a better result and a tighter upper bound on the partition function, we then need to have a closer look on the usual topologies of the lines. That is an order of magnitude harder, but might be worth the effort.
Indeed the previous worst-case example hinges heavily on having many lines crossing many segments, and topologically equivalent sets of lines have very low measure, looking like $(k 2^k k!)/(2k)!$ of the space of all sets of $k$ lines. 

By contrast, using very sloppy heuristics, we would expect that for most sets of $k$ lines, the number of T-tessellations on those lines behaves like
\[
\mathcal{N} = \sqrt{k}^k.
\]
The idea is the following: let us take a segment away of the true tessellation. How many different segments may we put on the line to get a tessellation again? Neglecting problems of children and murders, this would be the number of segments that the line cross, plus one. Now the probability of crossing a segment is essentially the length of this segment. So the number of crossed segments would be $k m$, where $m$ is the mean length of a segment. Now the mean length of a segment is the mean interval between two successive segments a line cross, that is $1/(km)$. So that $m$ should be of order $1/\sqrt{k}$, and for each new line, we have $k / \sqrt{k} = \sqrt{k}$ as many possibilities.

Thus it seems likely that the method in this paper gives little information on the measure, except its very existence.

\section*{Acknowledgements}
\label{ack}

I would like to thank the referees, whose many remarks have greatly contributed to make the article more readable.

\bibliographystyle{plainnat}
\bibliography{tessellations}

\appendix
\section{Technical proofs}
\label{proofs}

\subsection{Proof of Lemma \ref{distinct}}
\label{pdistinct}

\begin{proof}
    Denote by $l(\varepsilon ) = l + \varepsilon \vec{u}_{x}$ the translate of $l\in L$ by the translation of $\varepsilon $ along the time axis. Then for any other line $m \in L$, either $m$ is parallel to $l$, and $\crossroad(l(\varepsilon ),m)$ stays at infinity for $\varepsilon $ small enough, or $\crossroad(l(\varepsilon ),  m)$ is a continuous function of $\varepsilon$. Both $ \crossroad(l(\varepsilon ),  \boundary)$ and $ \crossroad(\boundary, l(\varepsilon ))$ are also  continuous function of $\varepsilon$ for $\varepsilon $ small enough. Since $l$ is not colinear to the time axis, all those functions are injective.

    So that for $\varepsilon $ small enough, strict order is preserved: if we write $m' = m(\varepsilon )$ if $m = l$ and $m' = m$ if $m\in (L\setminus \{l\})\cup\boundary$, then $\forall l_1, m_1, l_2, m_2 \in L\cup\boundary, \,\,\,\,\crossroad(l_1, m_1) >   \crossroad(l_2, m_2) \implies  \crossroad(l_1', m_1') > \crossroad(l_2', m_2')$  .
 
    Moreover there are finitely many crossroads, so that except for a finite number of $\varepsilon $, the translated line has all its crossroads distinct from any other crossroad: $ \forall  m_1, m_2, m_3 \in (L\setminus \{l\}) \cup\boundary, \crossroad(l(\varepsilon ), m_1) \neq \crossroad(m_2, m_3) $ and $\crossroad(\boundary, l(\varepsilon )) \neq \crossroad(m_2, m_3)  $ as points in the plane.

Hence we may map all lines $l$ in turn to $l' =  l + \varepsilon \vec{u}_x $, where $\varepsilon $ may depend on $l$ and is small enough, and we get a set of $k$ lines $L'$ with all crossroads distinct and such that strict order is preserved: 
\begin{align}
    \label{strictorder}
    \crossroad(l_1, m_1) >   \crossroad(l_2, m_2) & \implies  \crossroad(l_1', m_1') > \crossroad(l_2', m_2') & \forall l_1, m_1, l_2, m_2 \in L\cup\boundary.
\end{align}

\medskip

Let $T$ be a T-tessellation on $L$. We define $T'$ on $L'$ by mapping the trees of birth and death: if $l$ is a child of $m_1$ in $T$ and is killed by $m_2$ in $T$, then $l'$ is a child of $m'_1$ and is killed by $m'_2$ in $T'$, so that $s(l') = [\crossroad(m'_1,l') ,\crossroad(l',m'_2) ]$. Since strict order is preserved, death does happen after birth, and segments $s(l')$ are well-defined. Moreover, since strict order is preserved, $\crossroad(m'_1,l') $  is in the relint of $s(m'_1)$ and $\crossroad(l',m'_2) $ is in the relint of $s(m'_2)$  (with the convention $s(\boundary) = \boundary$). Let us prove $T'$ is really a T-tessellation.

All crossroads in $T'$ are distinct, so vertices can only involve the two elements of $L\cup \boundary$ defining the corresponding crossroad  $\crossroad(l',m')$. If it is of degree at least $3$, then both $s(l)$ and $s(m)$ (with the convention $s(\boundary) = \boundary$) include $\crossroad(l,m)$ in $T$. So that, one is born or killed by the other in $T$, say $m$ is born on $l$. By definition of $T'$, then $m'$ is born on $l'$, and $\crossroad(l',m') $ is a T-vertex in $T'$. Conversely, if say $s(l')$ has only one edge incident to $\crossroad(l',m') $, then by definition of $T'$, $l'$ is born on or killed by $m'$, and  $\crossroad(l',m') $ is again a T-vertex in $T'$. So that $T'$ is indeed a T-tessellation.

Moreover, the procedure is injective: since all crossroads in $T'$ are distinct, different trees of birth and death yield different T-tessellations. So that   $ \# \mathcal{T} (L) \leq \# \mathcal{T} (L')$.
\end{proof}

\subsection{Proof of Lemma \ref{lemalgo1}}

At the end of initialisation, we have the following properties:
\begin{itemize}
\item{Birth times are those of the tessellation for all lines $l$: $x_b^P(l) = x_b^T(l)$.}
\item{Death times are overestimated for all lines $l$: $x_d^P(l) \geq x_d^T(l)$.}
\item{The number of remaining murders $M(l)$ for each line is that of the true tessellation.}
\end{itemize}

Indeed the first and third points are merely the input, and the death times are set to an upper bound at stage \ref{bigdeath}.

\bigskip

What is important is that those properties will remain true throughout the loop that completes the algorithm (stage \ref{a1loop} of Algorithm \ref{algo1}). This will yield by recurrence that at the end of the $\crossroad$ iteration of the loop:
\begin{itemize}
\item{The remaining number of murders for each line $M(l)$ is that of the true tessellation $M^T(l, \crossroad) = \#\left\{ m \in L: x_d^T(m) = \crossroad(m,l) \mbox{ and } \crossroad(m,l) > \crossroad \right\} $.}
\item{Death times before $\crossroad$ are right, that is: $(x_d^T(l) \leq \crossroad) \Rightarrow (x_d^P(l) = x_d^T(l))$}
\end{itemize}

We have to prove that if this is true before the $\crossroad(l_1, l_2)$ iteration, it will be true after it.

\bigskip

Now, we pass the condition on stage \ref{a1cond1} if and only if there is a death in the real tessellation. Indeed, in that case
\[
x_b^P(l_1) = x_b^T(l_1) < \crossroad(l_1, l_2) \leq x_d^T(l_1) \leq x_d^P(l_1),
\]
and the same for $l_2$. If on the contrary there is no death, since segments do not cross \eqref{nocross}, either $x_b^T(l) \geq \crossroad(l_1, l_2)$ for one of the two lines, and then this also holds for $x_b^P(l) = x_d^T(l)$, or one of the two lines $l$ is already dead $x_d^T(l) < \crossroad(l_1, l_2)$. Then $x_d^P(l) < x_d^T(l)$ by recurrence hypothesis.

If we do not pass the condition, there are no changes to $M$ or $x_d^P$. On the other hand, there is no change to $M^T$, nor any new line whose death time is required to be right, so the conditions still hold.

If we do pass the condition, then either $x_d^T(l_1) = \crossroad(l_1, l_2)$,\linebreak[4] or $x_d^T(l_2) = \crossroad(l_1, l_2)$. In the first case, using the recurrence hypothesis,\linebreak[3] \mbox{$M(l_1) = M^T(l_1, \crossroad(l_1, l_2)) = 0$}, and $x_d^P(l_1)$ is set to $x_d^T(l_1)$, satisfying the second condition. The first condition is also still satisfied, since the only number of murders that changes for the real tessellation is that of $l_2$, which decreases by one, since $l_1$ is no more in the set of \emph{remaining} murders. Symmetrically, if $x_d^T(l_2) = \crossroad(l_1, l_2)$, then $M(l_1) = M^T(l_1, \crossroad(l_1, l_2)) > 0$ since it contains $l_2$, and this number of remaining murders is decreased by one while $x_d^P(l_2)$ is set to $x_d^T(l_2)$. So that the recurrence hypothesis is transmitted.

\bigskip

Since death times before $\crossroad$ are right, after we hit the last crossroad, all death times  are right, that is $x_d^P(l) = x_d^T(l)$ for all lines $l$. Hence the output pretessellation is the real tessellation.

\subsection{Proof of Proposition \ref{facts}}

The proof is implicitly a recurrence, following the algorithm. We may use the properties to be proved in the proof itself, with the intended meaning that they hold till that point.

First, the fact that birth before the true time of death of the parent is real~\eqref{realchild} is a consequence of the other properties. Thus we won't have to check it separately. Indeed, since births are late~\eqref{late_birth} but sooner than the true death~\eqref{superposition}, the time of birth is included in the true segment, maybe as first point: $x_b^T(m) \leq x_b^P(m) = \crossroad(l,m) < x_d^T(m)$. Since deaths are late, the LHS of equation~\eqref{realchild} means that we are in the relative interior of the parent $l$. Since there is no crossing~\eqref{nocross}, the line $m$ is really the child of $l$, that is $ x_b^T(m) = x_b^P(m)$.

\bigskip

A few conditions already hold after preinitialisation. 

Indeed on the one hand at stage \ref{death1} of Algorithm \ref{main}, we set the death times of all lines to the maximum possible, that is the rightmost point of the domain. So that $x_d^P(l) \geq x_d^T(l)$, deaths are late~\eqref{late_death}.

On the other hand, we know the birth times of the lines $l$ not in $U_0$. For those, $x_b^P(l) = x_b^T(l) < x_d^T(l)$. For the lines $l \in U_0$, whose parent we do not know, we set their birth time to that of their first child at stage \ref{birth1}. Child births are on the relative interior of a segment~\eqref{born_on_segment}. So that $x_d^T(l) > x_b^P(l) > x_b^T(l)$. So that births are late~\eqref{late_birth}, but before true death~\eqref{superposition}. Moreover, since $U$ is initialised as $U_0$, condition~\eqref{late_unknown} is fulfilled.

\medskip

Suppose we prove that up to a point in the algorithm,  birth and death times are decreasing~\eqref{decrease}. Then birth will still occur before true death~\eqref{superposition}.
 We thus do not check those facts separately.

 \medskip

A first remark is that times of birth and death are changed only at stages  \ref{birth2}, \ref{birth3},  \ref{death2}, \ref{death3} and \ref{death4} during initialisation. In all cases, there is a conditional stage just before requiring that the former value be greater, that is $x_{\bullet}(l) > \crossroad$. So that times of death and birth decrease throughout initialisation. The same is true within Algorithm~\ref{parent}, with changes at stages \ref{Parent_birth1} and \ref{Parent_birth2}. Thus, we only have to check property \eqref{decrease} in Algorithm~\ref{cuts}.

\bigskip

We shall now prove that the prototessellation at the end of initialisation satisfies the conditions in Property~\ref{facts}. It is now enough to prove that it is exactly the pretessellation described as $P(U_0)$ in equations \eqref{birth_nU0}, \eqref{birthU0} and~\eqref{deathU0}
.

\medskip

We use recurrence. We are following the $\algorithmicfor$ loop~\ref{forloop}. Iterations follow the crossroads timewise. With the following definition, 
\[\mathcal{V} (l, \crossroad) = \left\{ m \in L :  \crossroad \leq \crossroad(l,m) = x_d^{P(U_0)}(m) < x_d^{P(U_0)}(l),  \right\},\]
the recurrence hypothesis is, at the start of the $\crossroad(l_1, l_2)$ iteration:
\begin{align*}
    x_b^P(l) & = x_b^T(l) = x_b^{P(U_0)}(l)   &\text{for all $l \not \in U_0$,} \\
    x_b^P(l) & = 1        & \text{for all $l \in U_0$ such that $x_b^{P(U_0)}(l) \geq \crossroad(l_1, l_2)$,} \\
    x_b^P(l) & = x_b^{P(U_0)}(l)& \text{for all $l \in U_0$ such that $x_b^{P(U_0)}(l) < \crossroad(l_1, l_2)$,} \\
    x_d^P(l) & = 1     &  \text{for all $l$ such that $x_d^{P(U_0)}(l) \geq \crossroad(l_1, l_2)$,}  \\
    x_d^P(l) & = x_d^{P(U_0)}(l)   &  \text{for all $l$ such that $x_d^{P(U_0)}(l) <\crossroad(l_1, l_2)$,}  \\
    V^P(l)      & = \# \mathcal{V} (l, \crossroad(l_1, l_2)) & \text{for all $l \in L \cup \boundary$.}
\end{align*}
The recurrence hypothesis is satisfied after preinitialisation, and entails that $P=P(U_0)$ at the end of the $\algorithmicfor$ loop (we may add a do-nothing fictitious event at time $1$ to see the effect of the last iteration).

\emph{Transmission of recurrence hypothesis} 

In the  $\crossroad(l_1, l_2)$ iteration, functions may change only on $l_1$ or $l_2$. Now, since $x_{\bullet}^{P(U_0)}(l)$ may only take $\crossroad(l,\bullet)$ as a value, the equalities will remain valid for all the other lines $l \in L$. Same thing for $V^P$.

To hit stage~\ref{birth2}, we need $x_b^P(l_1) =  \crossroad(l_1, l_2)$. By recurrence, this means that $l_1 \not \in U_0$. So that it is really born here. Since segments are born  on the relative interior of their parent~\eqref{born_on_segment}, we get that $l_2 \in U_0$ and 
hence that $ x_b^{P(U_0)}(l_2)\geq \crossroad(l_1, l_2) $. So that $l_2$ has no real child before that time $ (\crossroad(l_2,m) = x_b^T(m) ) \implies \crossroad(l_2,m) \geq \crossroad(l_1, l_2)$. On the other hand, $l_1$ is its child, so that we have in fact $ x_b^{P(U_0)}(l_2) = \crossroad(l_1, l_2)$. Now stage~\ref{birth2} sets $x_b^P(l_2)$ to this value. Since it was $1$ beforehand, it has decreased~\eqref{decrease}.

Conversely, if  $ x_b^{P(U_0)}(l_2) = \crossroad(l_1, l_2)$, then $l_1 \not \in U_0$ and, by recurrence, we hit stage~\ref{birth2} and set $x_b^P(l_2)$ to $x_b^{P(U_0)}(l_2)$ 

Stage~\ref{birth3} is the same, switching $l_1$ and $l_2$. So that the recurrence hypotheses on birth times is transmitted.

\smallskip

By recurrence, we pass the $\algorithmicif$ statement~\ref{vmurder} if and only if $ x_b^{P(U_0)}(l) < \crossroad(l_1, l_2)  $ and $ x_d^{P(U_0)}(l) \geq \crossroad(l_1, l_2)  $ for both $l \in \{l_1, l_2\}$.

By recurrence, $V^P(l_1) = 0$ if  $ x_d^{P(U_0)}(l_1) = \crossroad(l_1, l_2) $. So that stage~\ref{death2} is attained, and $x_d^P(l_1) \gets\crossroad(l_1, l_2)  $. Since beforehand $x_d^{P(U_0)} = 1$ it has decreased as expected~\eqref{decrease}. If moreover  $ x_d^{P(U_0)}(l_2) \neq \crossroad(l_1, l_2) $, then 
$  l_1 \in \mathcal{V}  (l_2, \crossroad(l_1, l_2))$, so that $V^P(l_2) > 0$. Hence its death time does not change, and stage~\ref{vkill} is attained. At the start of the next iteration, at crossroad $\crossroad$, the line $l_1$ will not be anymore in $ \mathcal{V} (l_2, \crossroad)$, so that the recurrence hypothesis is indeed transmitted.

Switching the role of $l_1$ and $l_2$, we may reason in the same way.
Since there is no crossing~\eqref{nocross} in $P(U_0)$, at least one of  $ x_d^{P(U_0)}(l_1) $ or $  x_d^{P(U_0)}(l_2) $  is $ \crossroad(l_1, l_2) $, so that all cases are covered.

\emph{End of transmission of recurrence hypothesis}

\bigskip

We now have to deal with the main loop, separated in functions $Parent\_seek$ and $Cutting$. 

\medskip

Within $Parent\_seek$, the changes occur at stages \ref{Parent_birth1} and \ref{removeU1}, and symmetrically  \ref{Parent_birth2} and \ref{removeU2}. We shall always assume that we hit stage~\ref{Parent_birth1} instead of stage~\ref{Parent_birth2}. Since any line whose birth time we change is excluded from $U$, condition~\eqref{late_unknown} still holds. There are no change in death times, so that condition~\eqref{late_death} still holds. We already mentioned that condition~\eqref{decrease} hold throughout $Parent\_seek$.

To prove that all the lines are born on relative interiors of segments or on the boundary, we shall need this result:
\begin{property}
    \label{tech}
    Whenever we enter Algorithm $3$, if a line $l$ is born at crossroad $\crossroad(m,l)$, then at least one of the following conditions holds:
    \begin{itemize}
        \item{$m$ is the boundary $\boundary$.}
        \item{$m\in U$ and $x_b^P(m) = \crossroad(m,l) $.}
\item{$l\in U$.}
\item{We are on the relative interior of the parent segment: $x_b^P(m) < \crossroad(m,l) < x_d^P(m)$}
\end{itemize}
\end{property}

Property~\ref{tech} holds in $P(U_0)$: if $l\not \in U_0 = U$, then its birth time is right, that is $ x_b^{P(U_0)}(l) = x_b^T(l)$. Now if its parent $m$ is not in $U_0$, then we are on the relative interior: $ x_b^P(m) = x_b^T(m) < x_b^T(l) = x_b^P(l) < x_d^T(m) \leq x_d^P(m) $. If its parent $m$ is in $U_0$, then either it is its first child and  $x_b^P(m) = \crossroad(m,l) $, or $ x_b^P(m) < x_b^T(l) = x_b^P(l) < x_d^T(m)  \leq x_d^P(m) $. Later on, we will have to check this holds after Algorithm~\ref{cuts}. 

\smallskip

Now, we shall prove that all the lines in $U$ hit stage~\ref{Parent_birth1} exactly once. 

To start with, they cannot hit it more than once, since they are excluded from $U$ afterwards and will not pass the $\algorithmicif$ stage~\ref{cond1}. On the other hand, if they have not passed stage~\ref{cond1} earlier in the loop, they will at their true birth time $x_b^T(l)$. Since $l\in U$, we know that $x_b^P(l) > x_b^T(l)$. Besides, either the true parent $m$ of $l$ is not in $U_0$, or $l$ is not its first child. In both cases, $x_b^P(m) \leq x_b^{P(U_0)}(m) < x_b^T(l) = \crossroad(m,l) < x_b^T(m) \leq x_b^P(m)$, so that stage~\ref{cond1} is passed. We have used that birth and death times are late, and that changes decrease those times.

Since the loop is reverse timewise, statement~\ref{Parent_birth1} will be hit when we reach $x_b^T(l)$ at the latest, ensuring property~\eqref{late_birth}. Let us turn to being born  on the relative interior of its parent or on the boundary~\eqref{born_on_segment}. The form of the conditional stage~\ref{cond1} guarantees the property if $l\in U$. If $l\not\in U$, by Property~\ref{tech}, either it was already satisfying~\eqref{born_on_segment} before the loop, or
it was born on $m\in U$ with  $x_b^P(m) = \crossroad(m,l) $. Since $m$ is strictly prolongated backwards, $l$ will be born on the relative interior of $m$ after the loop.

Finally, we must show that $P$ stays a pretessellation, that is that a prolongated segment $s(l)$ does not cross any other segment, \emph{i.e.} that no point of the prolongation $\pi(l) = (x_b^{P_{after}}(l), x_b^{P_{before}}(l)]$ is in the relative interior of any segment. On the one hand, those prolongations are included in the true segment by properties~\eqref{superposition} and~\eqref{late_birth}. So that no two prolongations can cross.
On the other hand, the prolongation cannot cross the before-the-loop pretesselation $P_{before}$. If $x_b^{P_{before}}(m) < \crossroad(m, l) < x_d^{P_{before}}(m),x_b^{P_{before}}(l)  $ when we are still prolongating $l$, then we pass condition~\ref{cond1} and $ \crossroad(m, l) = x_b^{P_{after}}(l)$. So that segments still do not cross~\eqref{nocross}.

\bigskip

Within $Cutting$, the changes happen at stages \ref{Cdeath1}, \ref{cuts1}, \ref{U1} and the symmetrical   \ref{Cdeath2}, \ref{cuts2}, \ref{U2}. We shall always assume we hit stages~ \ref{Cdeath1}, \ref{cuts1} and \ref{U1} instead of their symmetric stages. No birth time is changed so births stay late \eqref{late_birth}.

To understand what is going on, let us consider a crossroad $\crossroad(l,m)$ where $l \in U_0$, $m\in L$, and $x_b^P(l) = \crossroad(l,m)$ at input. Since the birth time is included in the segment \eqref{born_on_segment}, if the death time $x_d^P(m)$ is set at stage~\ref{Cdeath1}, it is decreased, ending the proof of property~\eqref{decrease}.



The counter $O$ is the number of $U$-children of $m$ at input that are born at $\crossroad(l,m)$ at the latest. So that if we change its death time (stage \ref{Cdeath1}), there are $O^T(m)$ such children strictly before $\crossroad(l,m)$. Since it cannot have more children, and all children before its true death are real~\eqref{realchild}, we obtain $x_d^T(m) \leq \crossroad(l,m)$ and the condition on death times \eqref{late_death} is still fulfilled. The bound $x_d^T(m) \leq \crossroad(l,m)$ holds for the same reason if we hit stage~\ref{U1}, so that $m$ is not the true parent of $l$ and we have $x_b^P(l) < x_b^T(l)$. Thus condition~\eqref{late_unknown} is still fulfilled.

Notice that $O(\boundary) > O^T(\boundary)$ is impossible thanks to the late births~\eqref{late_birth}.

Since segments are only shortened during $Cutting$, they will not cross and $P$ stays a pretessellation.

We now only have to check that Property~\ref{tech} holds at the end of $Cutting$. The only possible problem would be from children of a line $m$ that has been cut, born between $x_d^{P_{after}}(m)$ and $x_d^{P_{before}}(m)$. But all those lines are added to $U$ at stage~\ref{U1}.
 This ends the proof.

\section{Algorithms and corresponding figures}
\label{alg_and_fig}

\begin{algorithm}
\caption{Rebuild from tree of births and number of murders.}
\label{algo1}
\ \\
\noindent
\textbf{Input:} \ The set $L$ of lines of the tessellation, a prototessellation $(x_b, x_d)$ such that $x_b = x_b^T$,  a murder function $M: L \cup \boundary \to \mathbb{N}$ such that $M = M^T$ the number of murders in the real tessellation \eqref{MT}, the ordered set $\oCrossroad $ of crossroads.  \\

\begin{algorithmic}[1]
\ForAll{$l \in L$} 
\label{bigdeath} \State $x_d(l) \gets 1$                    \Comment{Death time temporarily set to maximum}
\EndFor                                     \Comment{End of initialisation}
\For{$\crossroad \in \oCrossroad $}                 \Comment{Consider potential crossroads timewise} \label{a1loop}
 \State $l_1, l_2 \gets l(\crossroad )$
 \If{$[x_b(l_1) < \crossroad < x_d(l_1)] \land  [x_b(l_2) < \crossroad < x_d(l_2)]$}  \label{a1cond1}
\\ \Comment{Do the lines cross?}
  \If{$M(l_1) = 0$}                          \Comment{Which line is killed?}
   \State $x_d(l_1) \gets \crossroad $      \Comment{Kill it}
   \State $M(l_2) \gets M(l_2) - 1$     \Comment{Count that $l_2$ killed it}
  \Else
   \State $x_d(l_2) \gets \crossroad $
   \State $M(l_1) \gets M(l_1) - 1$
  \EndIf 
 \EndIf 
\EndFor 

\Return P 
\end{algorithmic}

\end{algorithm}

\begin{figure}
\centering
\subfloat[][]{
             \label{rec1a}
             \includegraphics[width=.49\textwidth]{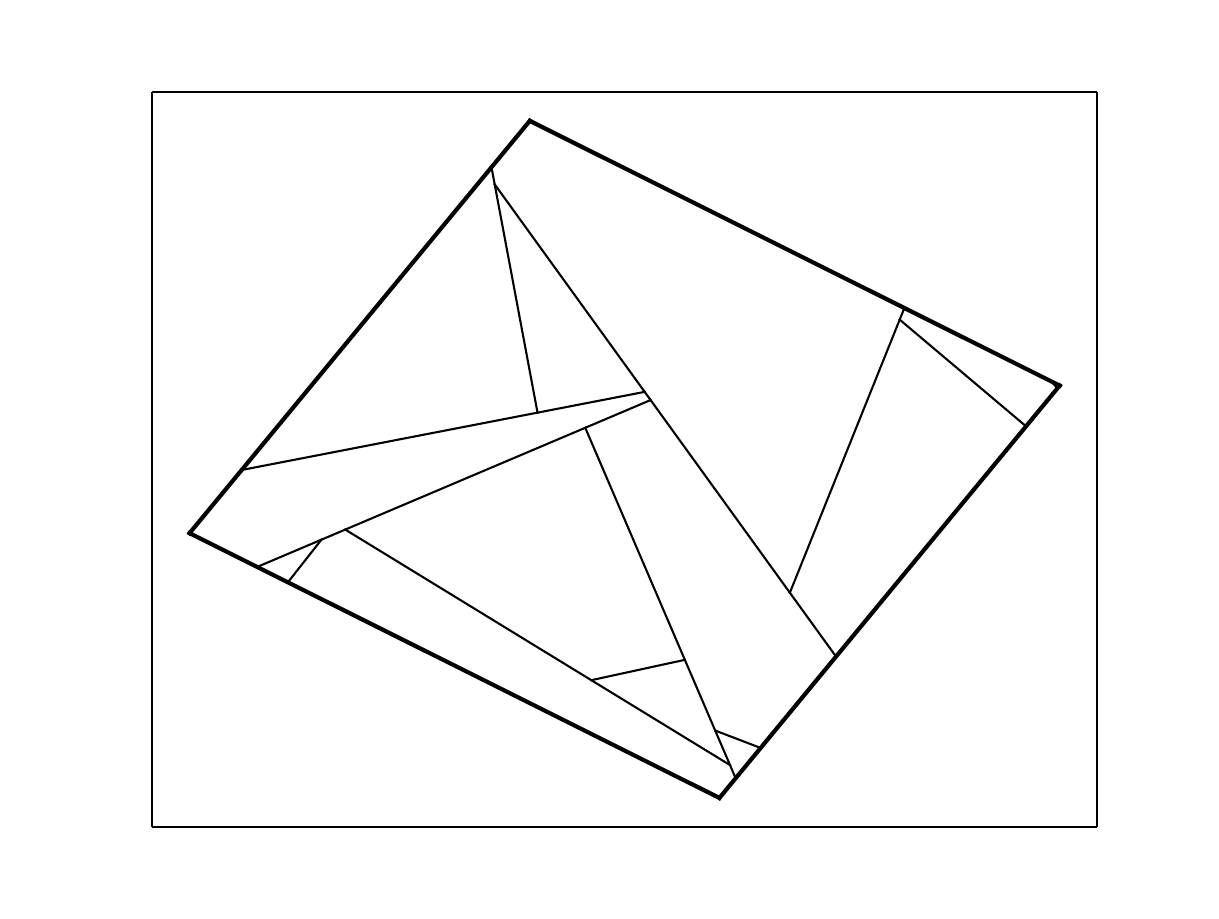}
             }
\subfloat[][]{
             \label{rec1b}
             \includegraphics[width=.49\textwidth]{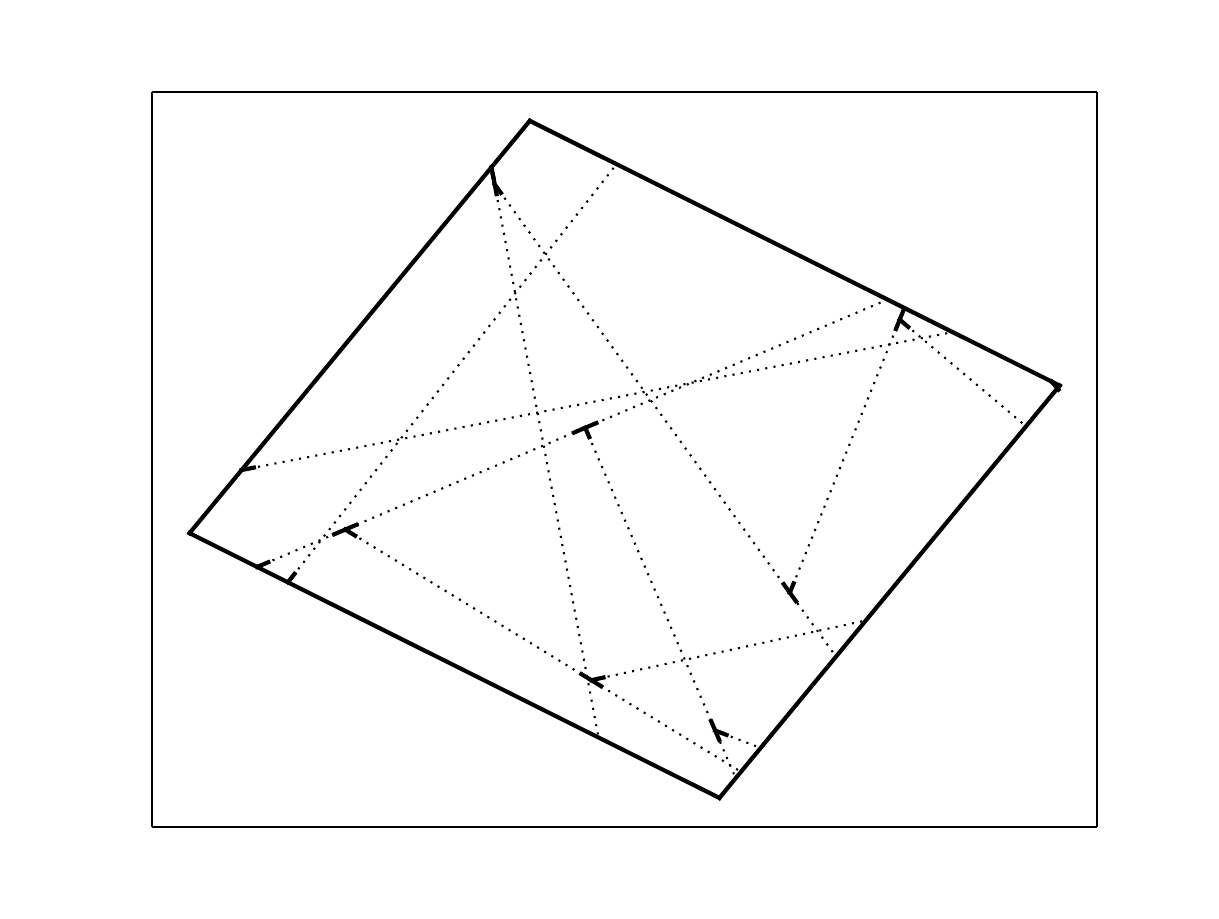}
             }\\
\subfloat[][]{
             \label{rec1c}
             \includegraphics[width=.49\textwidth]{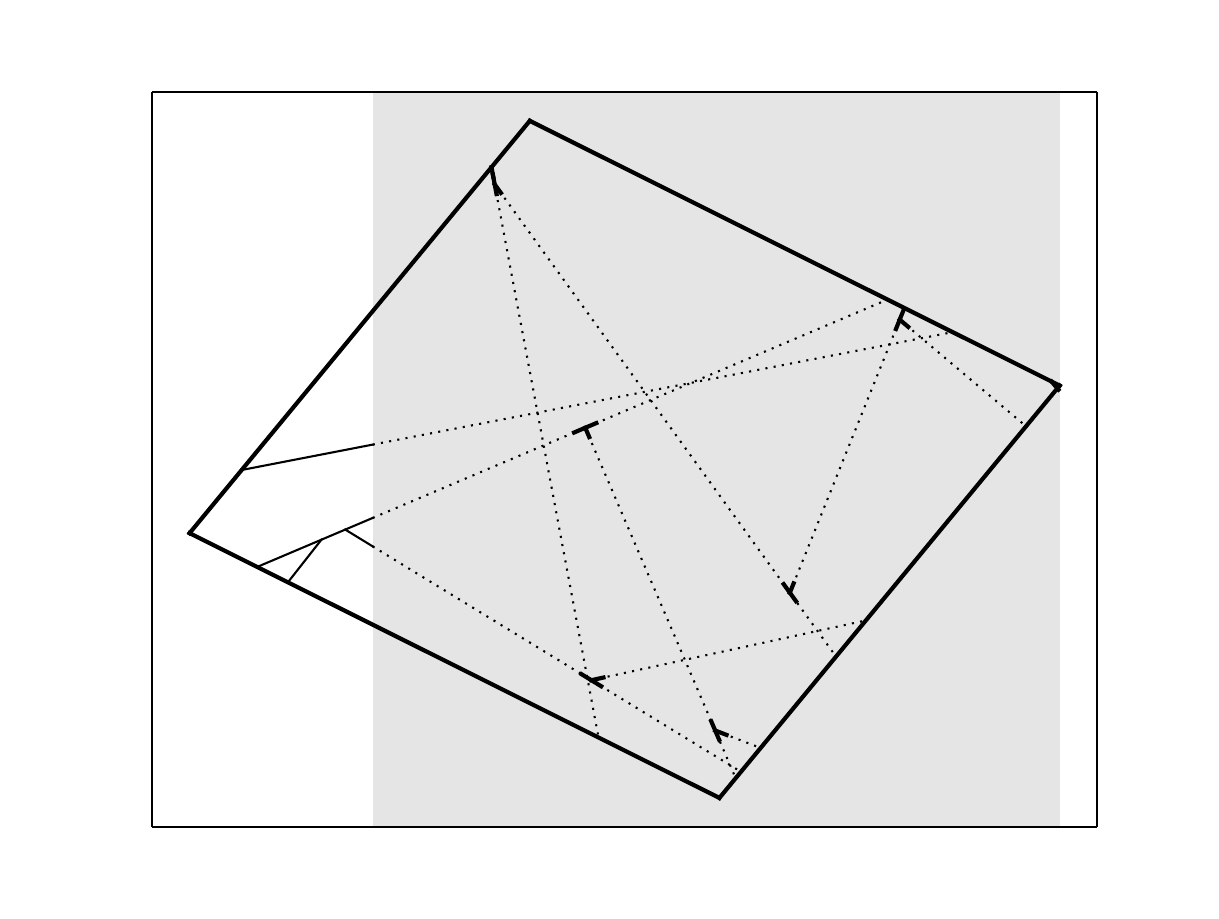}
             }
\subfloat[][]{
             \label{rec1d}
             \includegraphics[width=.49\textwidth]{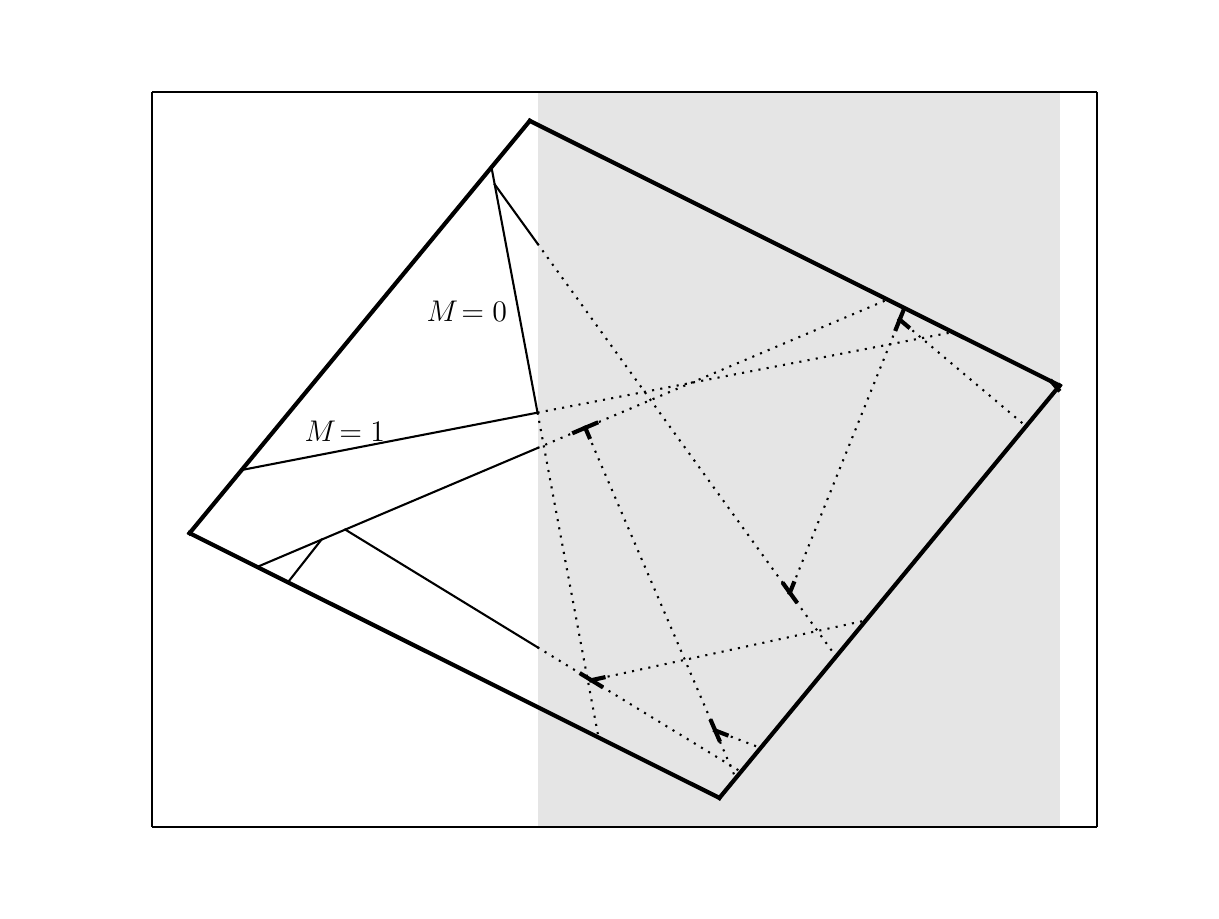}
             }\\
\subfloat[][]{
             \label{rec1e}
             \includegraphics[width=.49\textwidth]{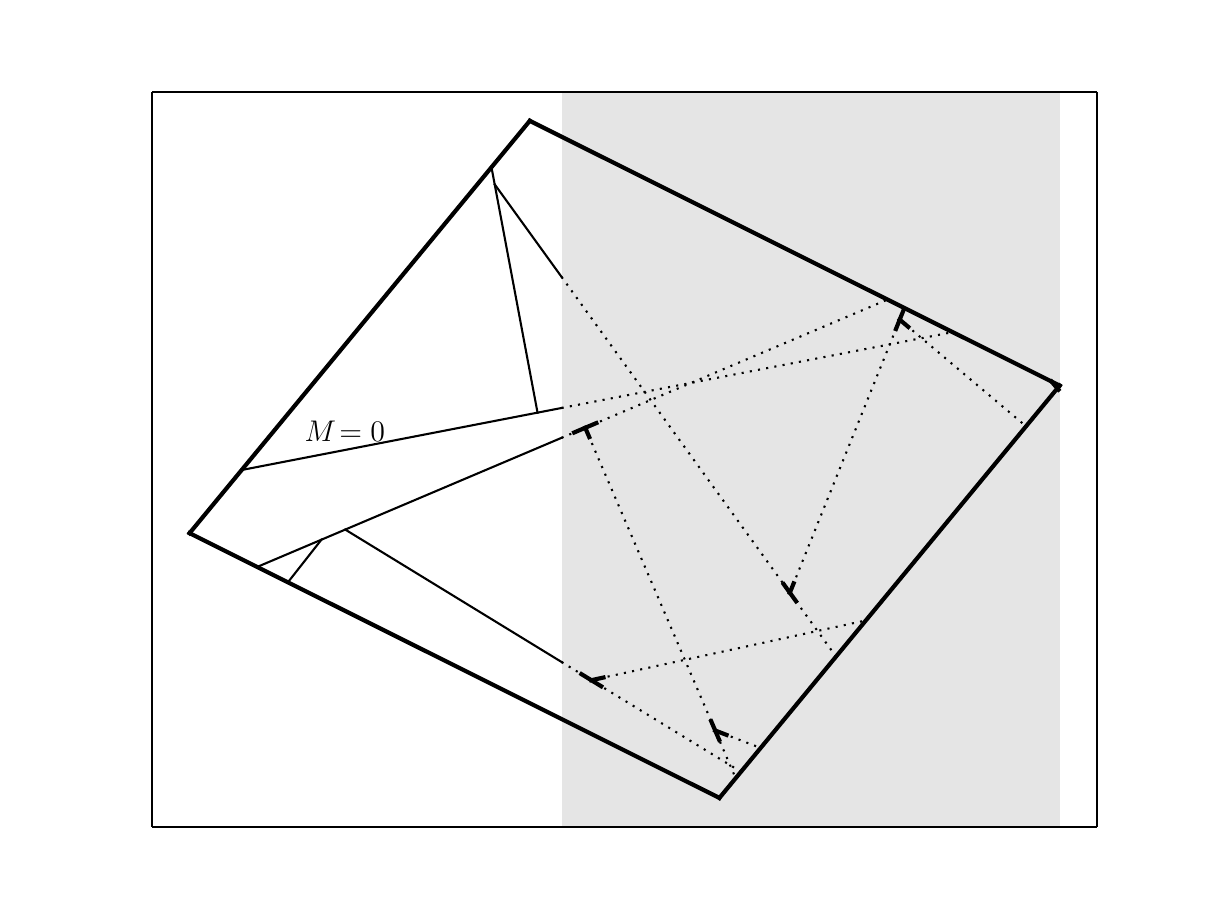}
             }
\subfloat[][]{
             \label{rec1f}
             \includegraphics[width=.49\textwidth]{rec1a.pdf}
             }
             \caption{\label{fig_algo1}\protect\subref{rec1a} is the tessellation to be rebuilt. We start knowing the lines, the birth times \protect\subref{rec1b} and the murder function. We move along the abcissas axis and prolongate the live segments \protect\subref{rec1c}. When two segments cross \protect\subref{rec1d}, we kill the one with zero murders left and decrease the other's counter \protect\subref{rec1e}. At the end of the pass, we get the tessellation \protect\subref{rec1f}.}
\end{figure}

\begin{algorithm}
\caption{Rebuild from final encoding}
\label{main}
\ \\
\noindent
\textbf{Input:} The lines $L$, the ordered and reverse-ordered list of crossroads $\oCrossroad$ and $\aCrossroad$, a subset $U_0 \subset L$ of orphan lines satisfying requirement~\eqref{binary}, a prototessellation $P = (x_b, x_d)$ such that $x_b(l) = x_b^T(l)$ for all non-orphan line $l \in L\backslash U_0$, a function ``virtual murders'' $V: L \cup \boundary \to \mathbb{N} $ defined in \eqref{Vl} , and a function $O^T: L \cup \boundary \in \mathbb{N} $ giving the number of orphan children a line has \eqref{Others}.\ \\
\begin{algorithmic}[1]
\State $U \gets U_0$
\ForAll{$l \in L$} 
 \State $x_d(l) \gets 1$                    \Comment{Death time set to maximum for now} \label{death1}
\EndFor  
\ForAll{$l \in U$}
 \State $x_b(l) \gets 1$                    \Comment{Birth of orphans set to maximum for now} \label{birth1} 
\EndFor 
                                            \Comment{End of ``preinitialisation''}   \label{end_preinitialisation}
\ForAll{$\crossroad \in \oCrossroad$}                                                \label{forloop}
 \State $l_1, l_2 \leftarrow l(\crossroad )$
 \If{$[x_b(l_1) = \crossroad] \land [x_b(l_2) > \crossroad]$} \Comment{Is $l_1$ the first child of $l_2$?}
  \State $x_b(l_2) \gets \crossroad$                  \Comment{Temporary maximum birth time for $l_2$} \label{birth2} 
 \ElsIf{$[x_b(l_2) = \crossroad] \land [x_b(l_1) > \crossroad]$} 
  \State $x_b(l_1) \gets \crossroad$                                                                   \label{birth3}
  \ElsIf{$[x_b(l_1) < \crossroad < x_d(l_1)] \land  [x_b(l_2) < \crossroad < x_d(l_2)]$} \label{vmurder} \\ \Comment{Do the lines cross?}         
  \If{$V(l_1) = 0$}                               \Comment{Is $l_1$ ``virtual-killed''?}
   \State $x_d(l_1) \gets \crossroad$                 \Comment{Death time to new maximum} \label{death2}
   \If{$V(l_2) = 0$}                              \Comment{Is $l_2$ ``virtual-killed''?}
    \State $x_d(l_2) \gets \crossroad$                \Comment{Death time to new maximum} \label{death3}
   \Else
   \State $V(l_2) \gets V(l_2) - 1$              \Comment{Count that $l_2$ ``virtual-killed'' $l_1$}    \label{vkill}
   \EndIf 
  \Else                                           \Comment{In that case $l_2$ is ``virtual-killed''}
   \State $V(l_1) \gets V(l_1) - 1$               \Comment{Count that $l_1$ ``virtual-killed'' $l_2$}
   \State $x_d(l_2) \gets \crossroad$                 \Comment{Death time to new maximum}               \label{death4}
  \EndIf
 \EndIf 
\EndFor                                           \Comment{End of initialisation}

\Repeat
 \State $P \gets Parent\_seek(L, P, \oCrossroad, U)$               \Comment{Extend backwards}
 \State $\Cuts, U, P \gets Cutting(L, P, \aCrossroad, U_0, O^T)$    \Comment{Cut too long segments}
\Until{$\Cuts = 0$} 

\Return $P$
\end{algorithmic}
\end{algorithm}

\begin{algorithm}
\caption{Parent-seeking loop}
\label{parent}
\ \\
\noindent
\textbf{Input:} The lines $L$, a pretessellation $P = (x_b, x_d)$, the ordered sequence of crossroads $\oCrossroad$, a subset $U \subset L$ of lines whose parent is not currently known. \ \\
\begin{algorithmic}[1]

\ForAll{$\crossroad \in \aCrossroad$}                     \Comment{Reverse timewise}
 \State $l_1, l_2 \leftarrow l(\crossroad )$         
 \If{$[l_1 \in U] \land [x_b(l_1) > \crossroad] \land [x_b(l_2) < \crossroad < x_d(l_2)]$}                 \label{cond1} 
  \State $x_b(l_1) \gets \crossroad$                   \Comment{Extend $l_1$ backwards}                \label{Parent_birth1} 
  \State $U \gets U - l_1$                         \Comment{$l_1$ seen as born on $l_2$, for now}  \label{removeU1}
 \EndIf 
 \If{$[l_2 \in U] \land [x_b(l_2) > \crossroad] \land [x_b(l_1) < \crossroad < x_d(l_1)]$}                \label{cond2} 
  \State $x_b(l_2) \gets \crossroad$                   \Comment{Same as above, $l_1$ and $l_2$ switched} \label{Parent_birth2}
  \State $U \gets U - l_2$                                                                           \label{removeU2}
 \EndIf
\EndFor

\Return $P$

\end{algorithmic}
\end{algorithm}

\begin{algorithm}
\caption{Cutting loop}
\label{cuts}
\ \\
\noindent
\textbf{Input:} The lines $L$, a pretessellation $P = (x_b, x_d )$, the reverse-ordered sequence of crossroads $\aCrossroad$, a subset $U_0 \subset L$ of orphan lines, a function $O^T: L \cup \boundary \to \mathbb{N} $ giving the number of orphan children a line has, and a variable set $U \subset L$ initially empty. \ \\
\begin{algorithmic}[1]

\State $\Cuts \gets 0$                             \Comment{Reset number of cuts}
\ForAll{$l \in L$}
 \State $O(l) \gets 0$                             \Comment{Reset number of orphan children}
\EndFor 

\ForAll{$\crossroad \in \oCrossroad$}                      \Comment{Timewise}
 \State $l_1, l_2 \leftarrow l(\crossroad )$     
 \If{$[l_1 \in U_0] \land [x_b(l_1) = \crossroad]$}                      \label{current_birth}
  \State $O(l_2) \gets O(l_2) + 1$                 
  \If{$O(l_2) = O^T(l_2) + 1$}                                              \label{trop}
   \State $x_d(l_2) \gets \crossroad$                  \Comment{Death time to new maximum}        \label{Cdeath1}
   \State $\Cuts \gets 1$                                                                     \label{cuts1}
  \EndIf 
  \If{$O(l_2) > O^T(l_2)$}                        \Comment{$l_2$ has too many orphan children}
   \State $l_1 \in U$                              \Comment{We do not know the father of $l_1$} \label{U1}
  \EndIf                                                         \label{finsym1}
 \ElsIf{$[l_2 \in U_0] \land [x_b(l_2) = \crossroad]$} \Comment{Same, switching $l_1$ and $l_2$} \label{current_birth2}
  \State $O(l_1) \gets O(l_1) + 1$                                                         
  \If{$O(l_1) = O^T(l_1) + 1$}                                                                       
   \State $x_d(l_1) \gets \crossroad$                                                       \label{Cdeath2}         
   \State $\Cuts \gets 1$                                                                      \label{cuts2}
  \EndIf 
  \If{$O(l_1) > O^T(l_1)$}                       
   \State $l_2 \in U$                                                           \label{U2}                       
   \EndIf                                                                        \label{finsym2}
 \EndIf 
\EndFor

\Return $\Cuts, U, P$

\end{algorithmic}
\end{algorithm}

\begin{figure}
\centering
\subfloat[][]{
             \label{rec20}
             \includegraphics[width=.49\textwidth]{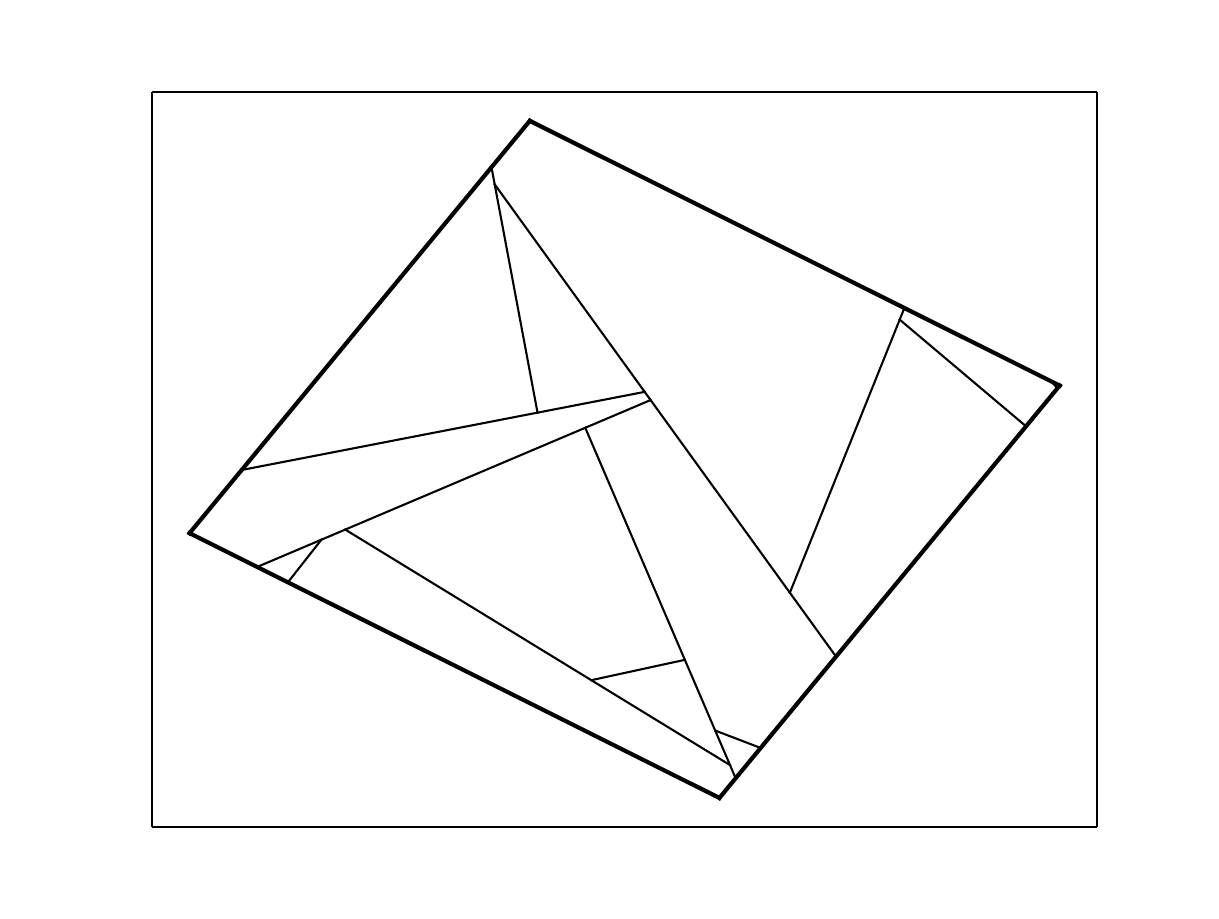}
         }
\subfloat[][]{
             \label{rec21}
             \includegraphics[width=.49\textwidth]{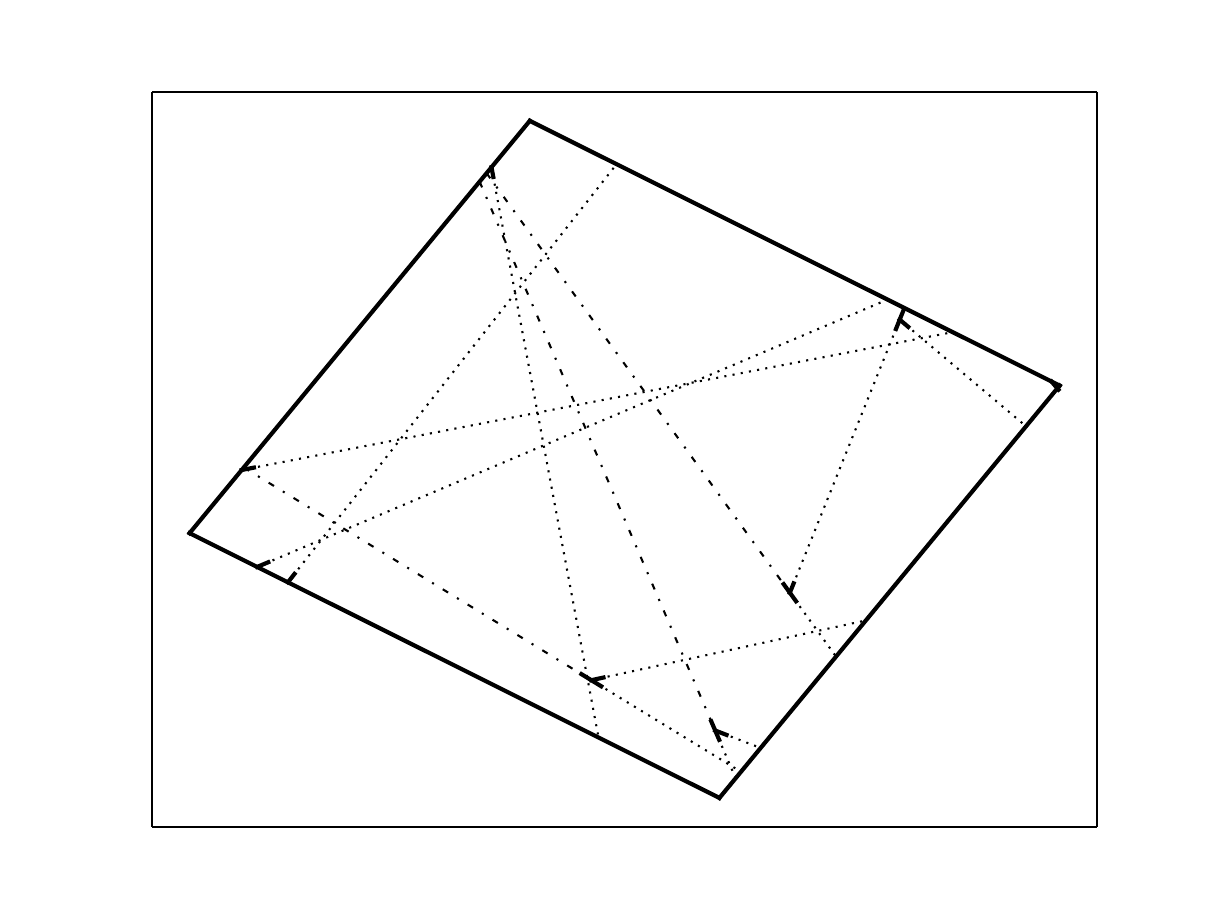}
             }\\
\subfloat[][]{
             \label{rec2a}
             \includegraphics[width=.49\textwidth]{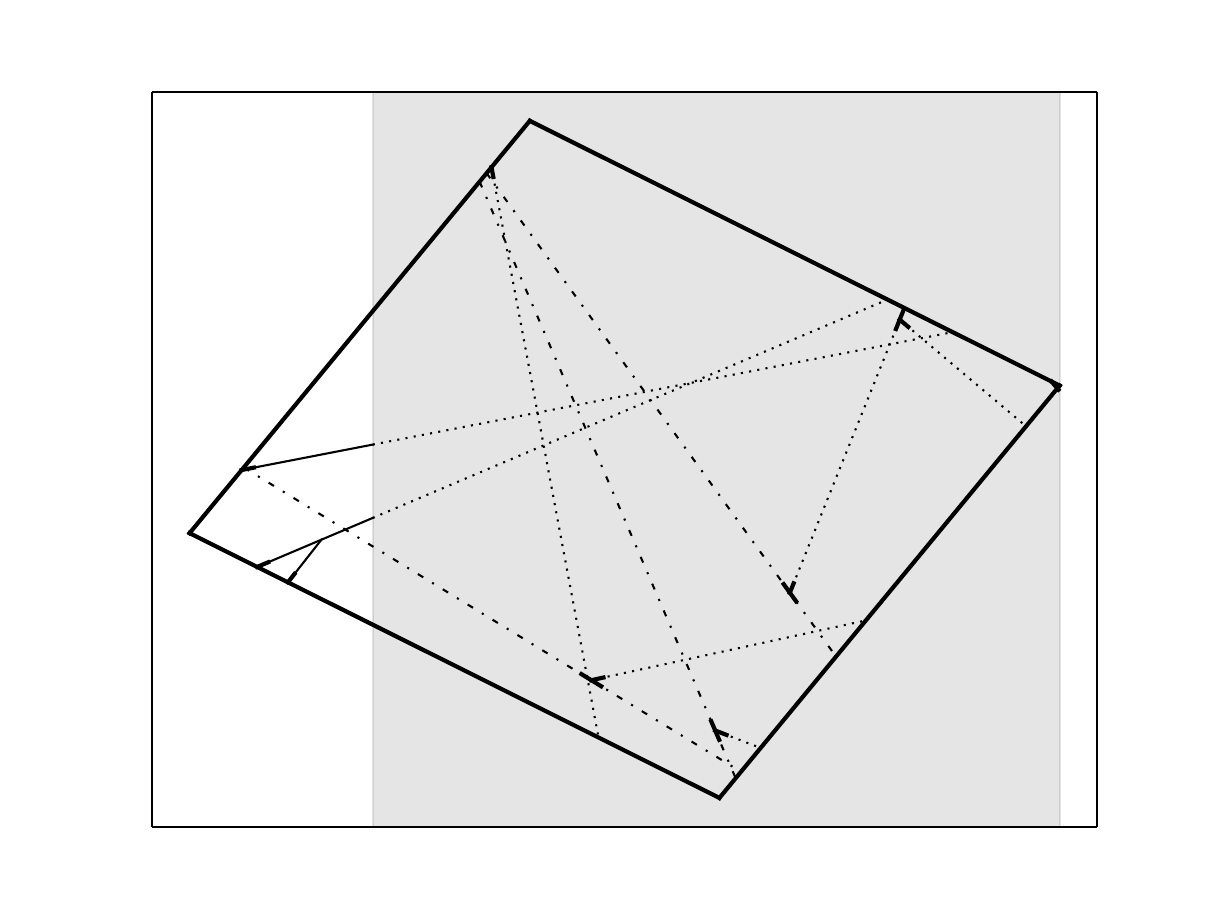}
             }
\subfloat[][]{
             \label{rec2b}
             \includegraphics[width=.49\textwidth]{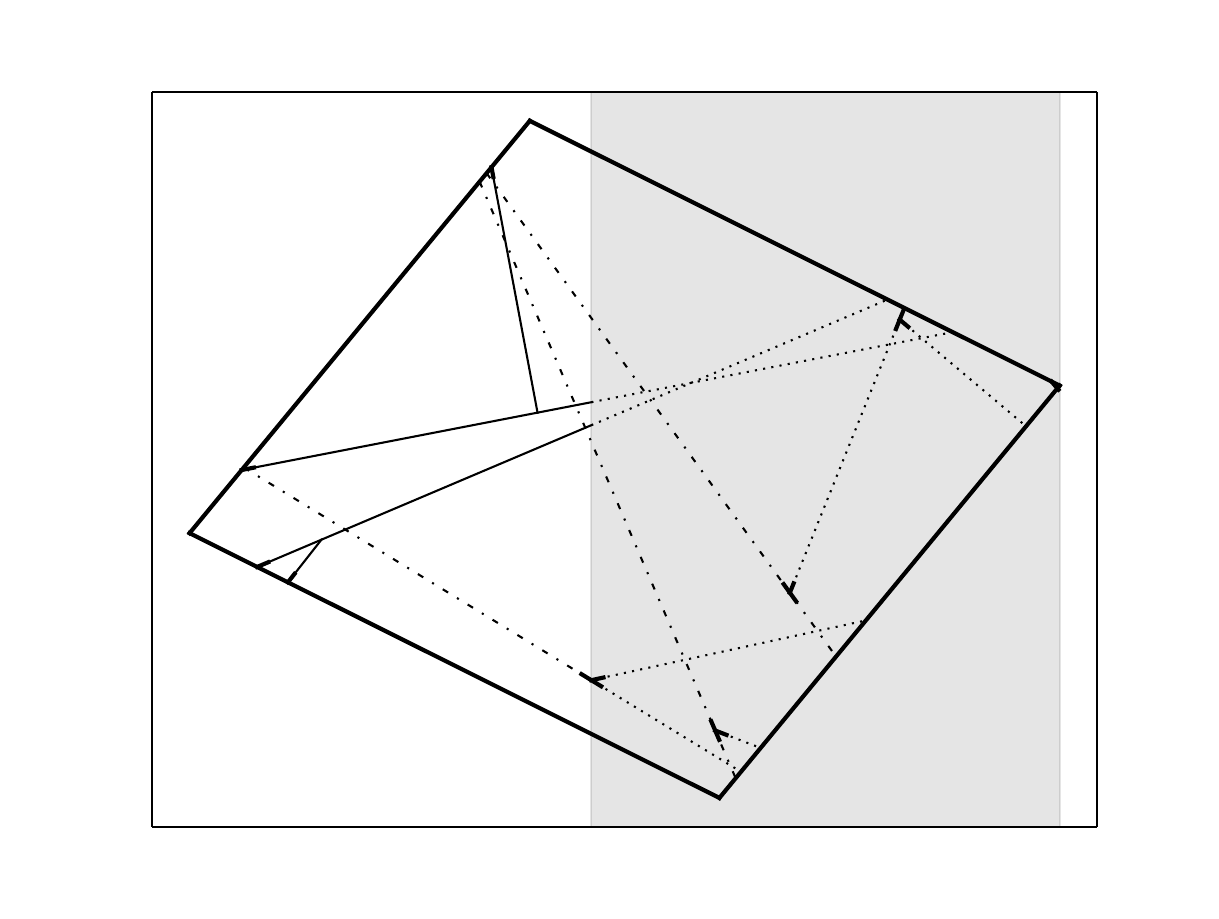}
             }\\
\subfloat[][]{
             \label{rec2c}
             \includegraphics[width=.49\textwidth]{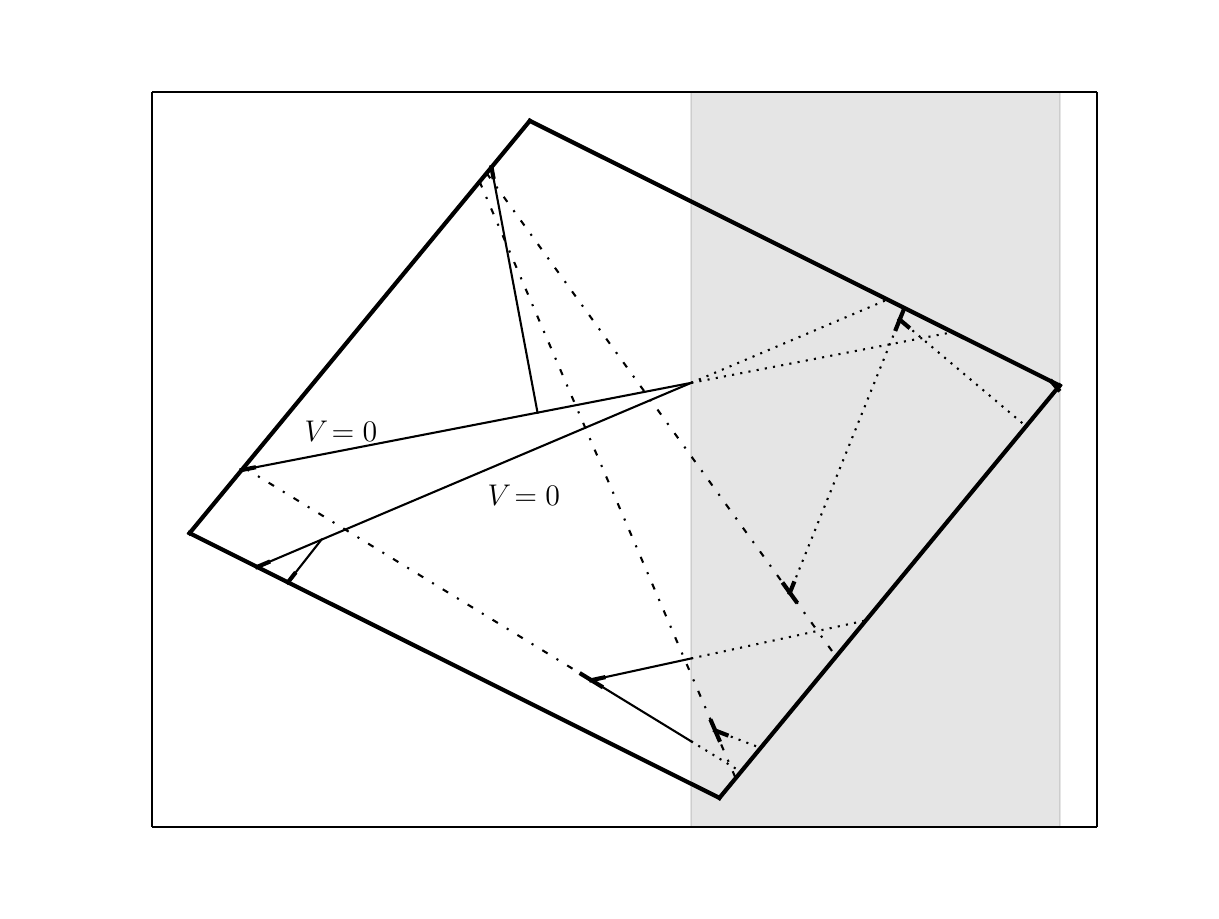}
             }
\subfloat[][]{
             \label{rec2d}
             \includegraphics[width=.49\textwidth]{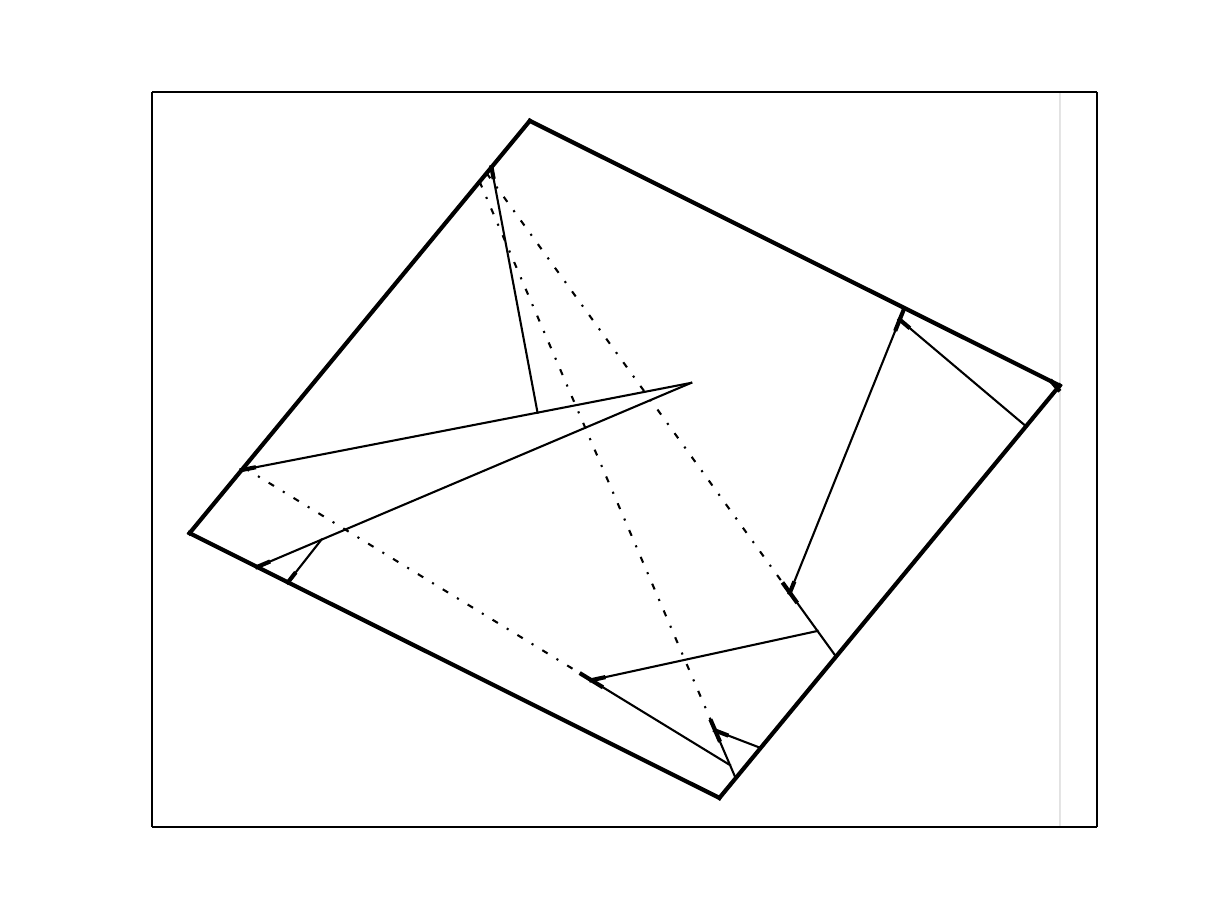}
             }\\
             \caption{\label{rec2}\small\protect\subref{rec20} is the tessellation to be rebuilt. We start knowing the lines, the birth times of some of the lines, the first child of the others \protect\subref{rec21} and the virtual murder and orphan children functions. The half-dotted lines correspond to the lines whose birth is unknown. We move along the abcissas axis and prolongate the live segments \protect\subref{rec2a}. When we reach the birth time of the first child of a line, we add both the line and its child to the live segments \protect\subref{rec2b}. When two segments cross, we kill those with zero virtual murders left \protect\subref{rec2c}. At the end of the initialisation pass, we get a pretessellation \protect\subref{rec2d}.}
\end{figure}

\begin{figure}
    \ContinuedFloat
    \centering
\subfloat[][]{
             \label{rec2e}
             \includegraphics[width=.49\textwidth]{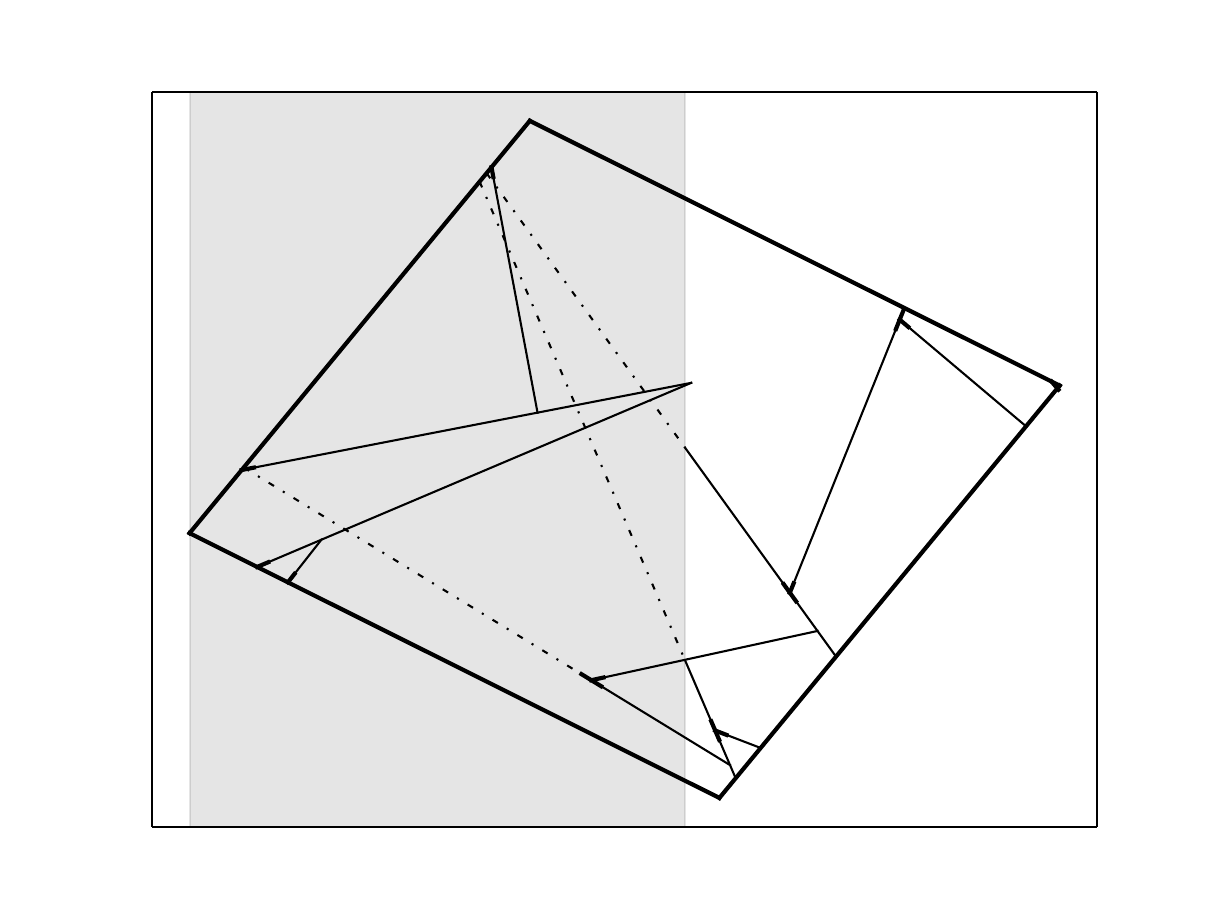}
             }
\subfloat[][]{
             \label{rec2f}
             \includegraphics[width=.49\textwidth]{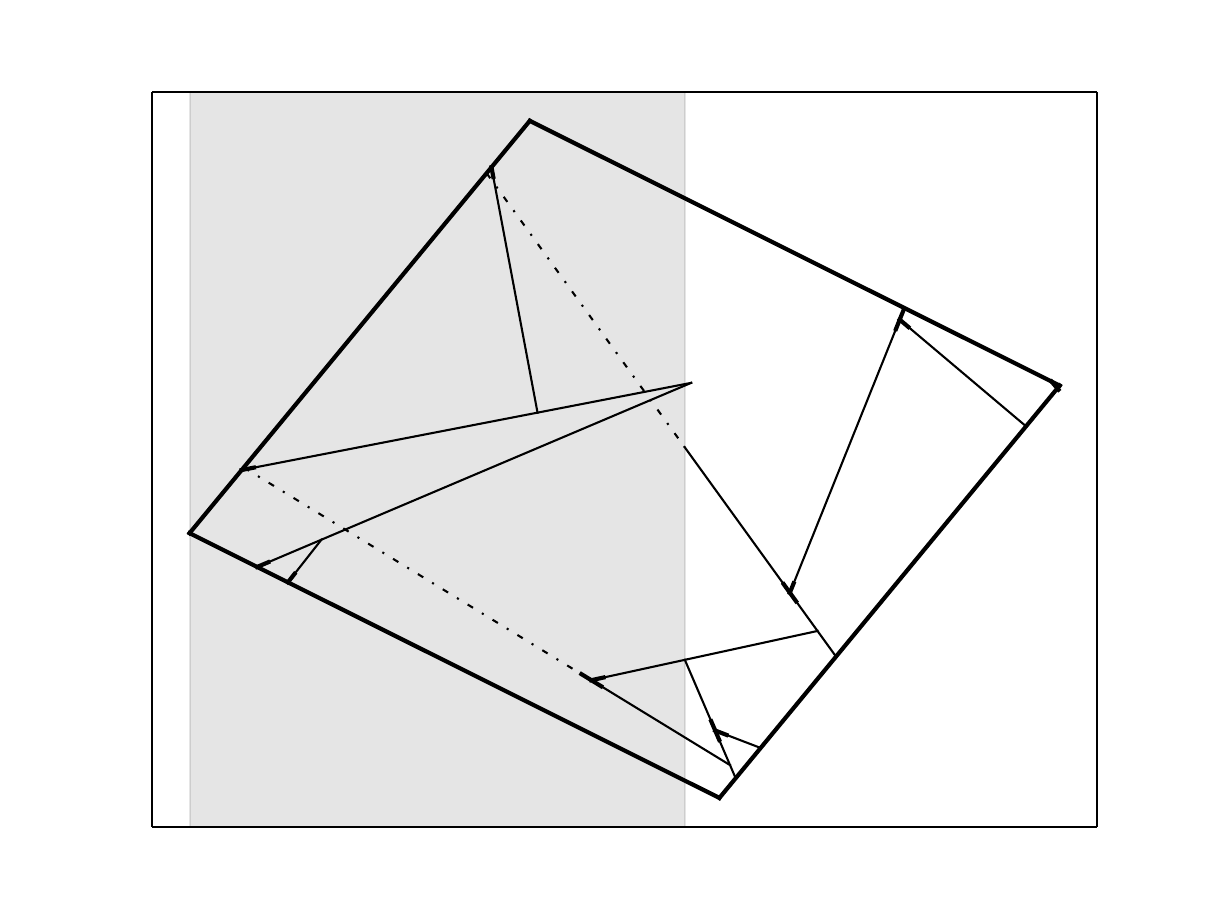}
             }\\
\subfloat[][]{
             \label{rec2g}
             \includegraphics[width=.49\textwidth]{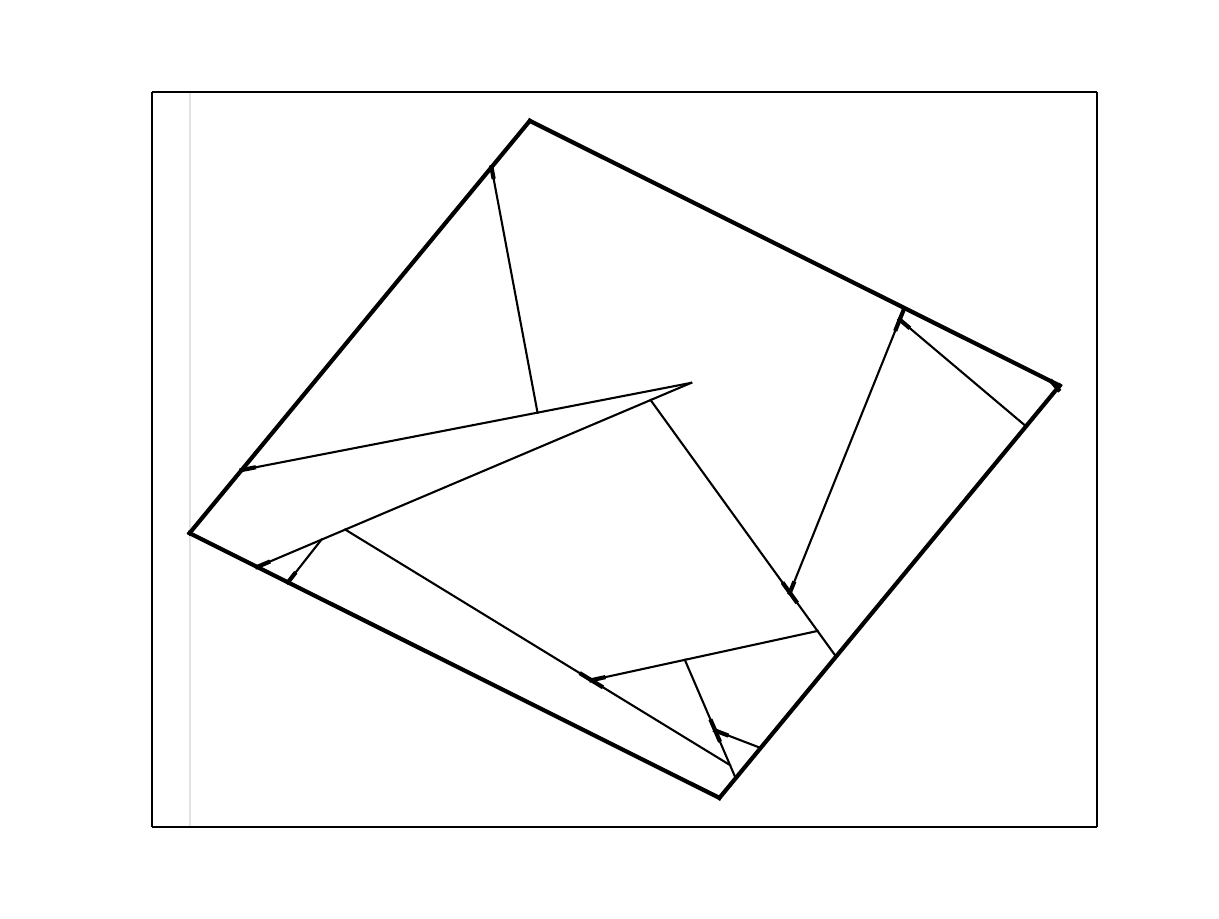}
             }
\subfloat[][]{
             \label{rec2h}
             \includegraphics[width=.49\textwidth]{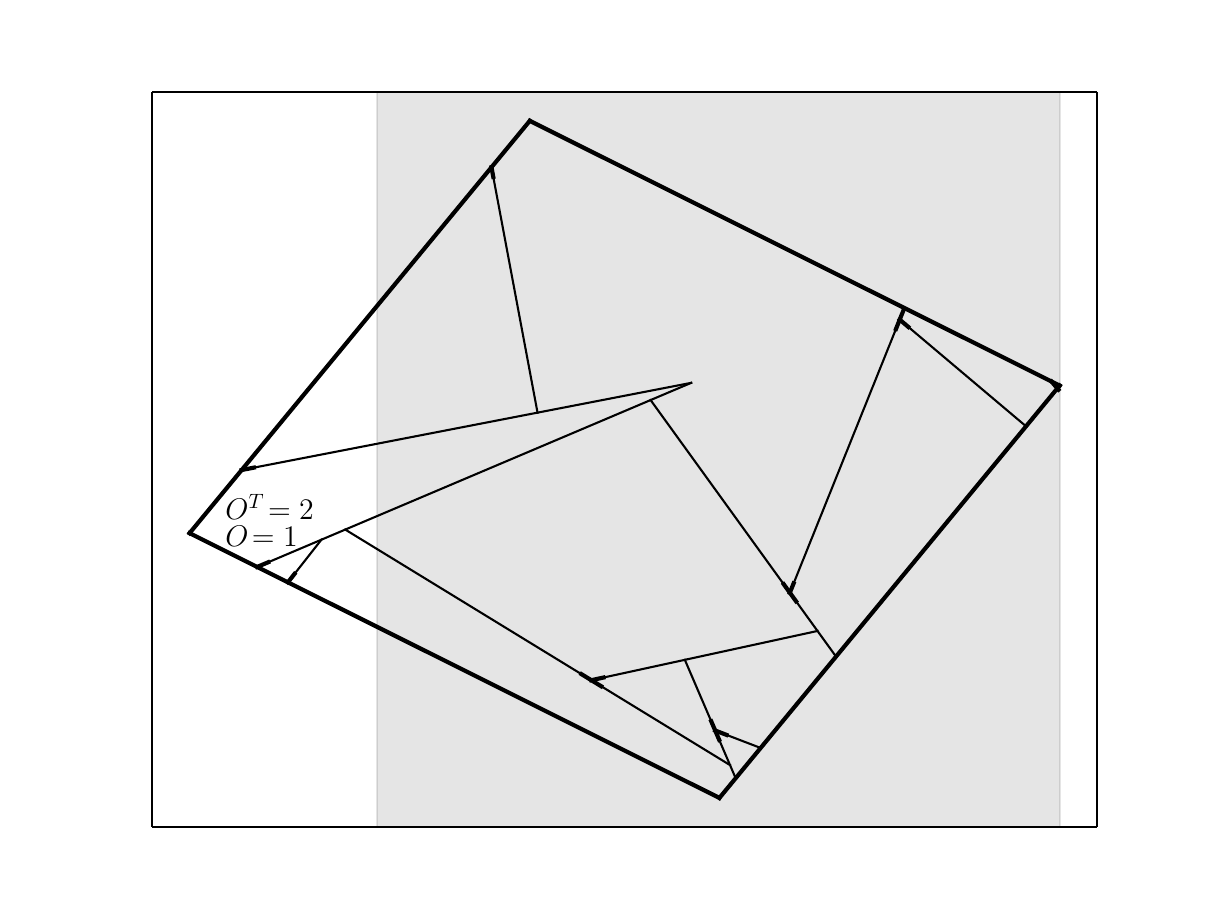}
             }\\
\subfloat[][]{
             \label{rec2i}
             \includegraphics[width=.49\textwidth]{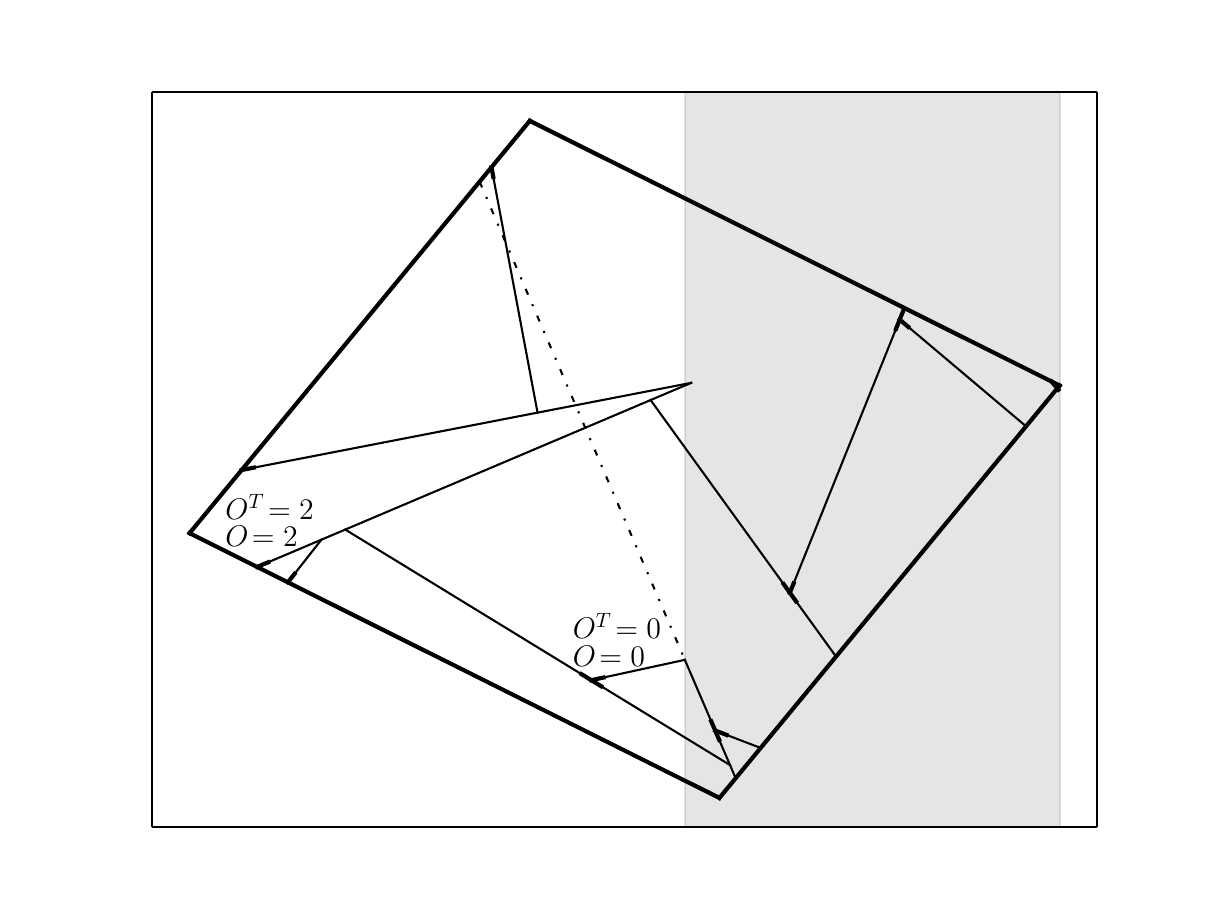}
             }
\subfloat[][]{
             \label{rec2j}
             \includegraphics[width=.49\textwidth]{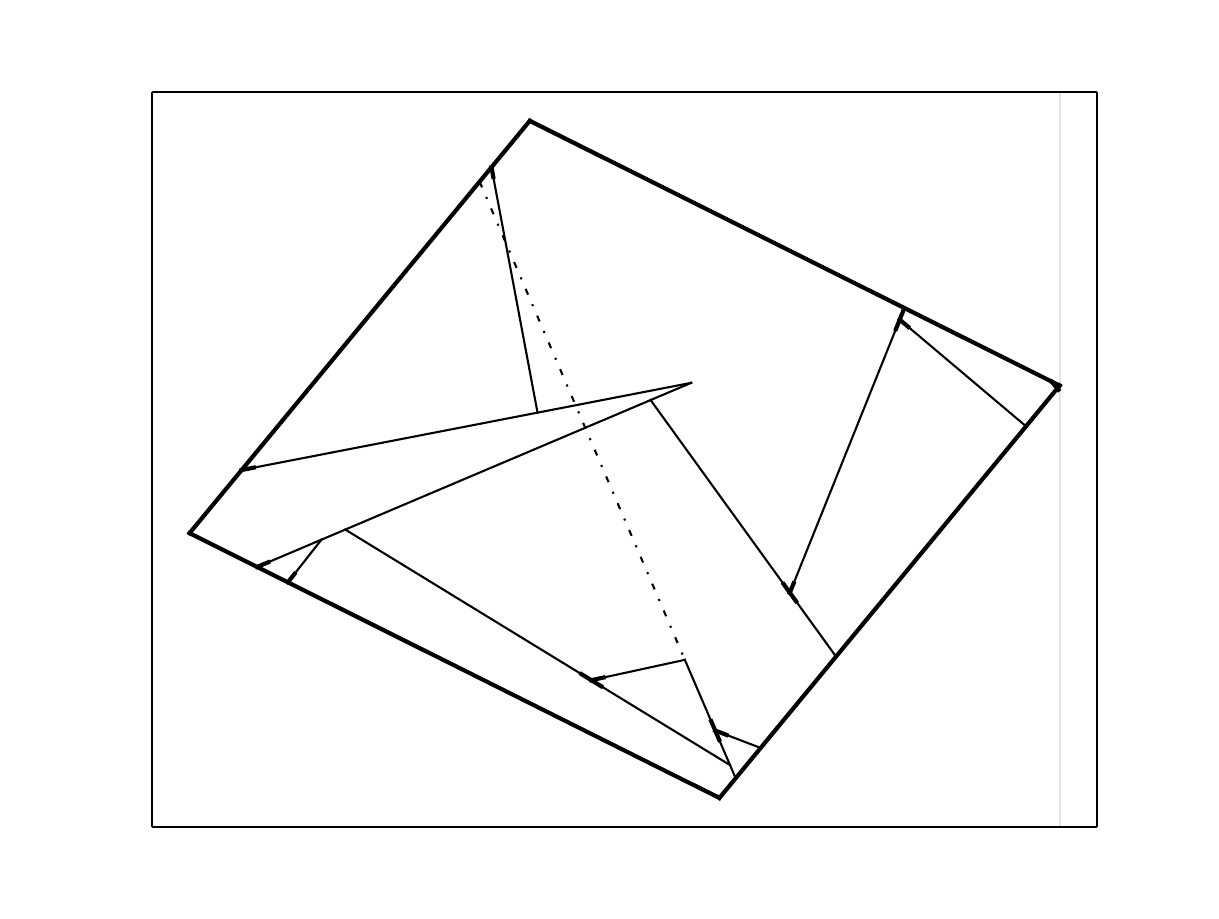}
             }
             \caption{We move backwards in time, prolongating the segments whose parent we do not know \protect\subref{rec2e} until they hit another segment \protect\subref{rec2f}. At the end of the pass, every segment has a putative parent \protect\subref{rec2g}. We move again forward, comparing the number of orphan children with that in the true tessellation \protect\subref{rec2h}. When it would be exceeded, the putative parent is cut \protect\subref{rec2i}. After the pass, no segment has too many children, but some parents are still unknown \protect\subref{rec2j}.}
\end{figure}

\begin{figure}
\ContinuedFloat
\centering
\subfloat[][]{
             \label{rec2k}
             \includegraphics[width=.49\textwidth]{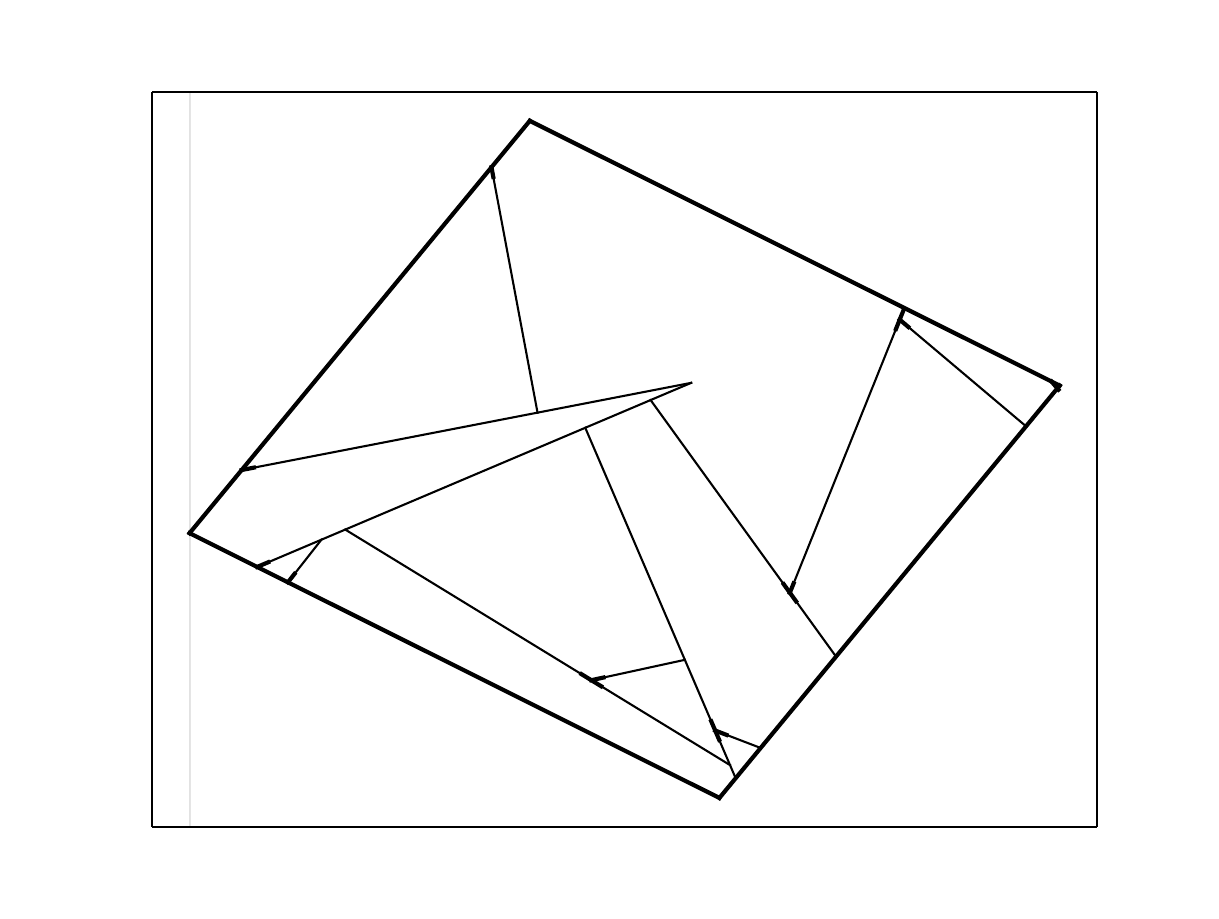}
             }
\subfloat[][]{
             \label{rec2l}
             \includegraphics[width=.49\textwidth]{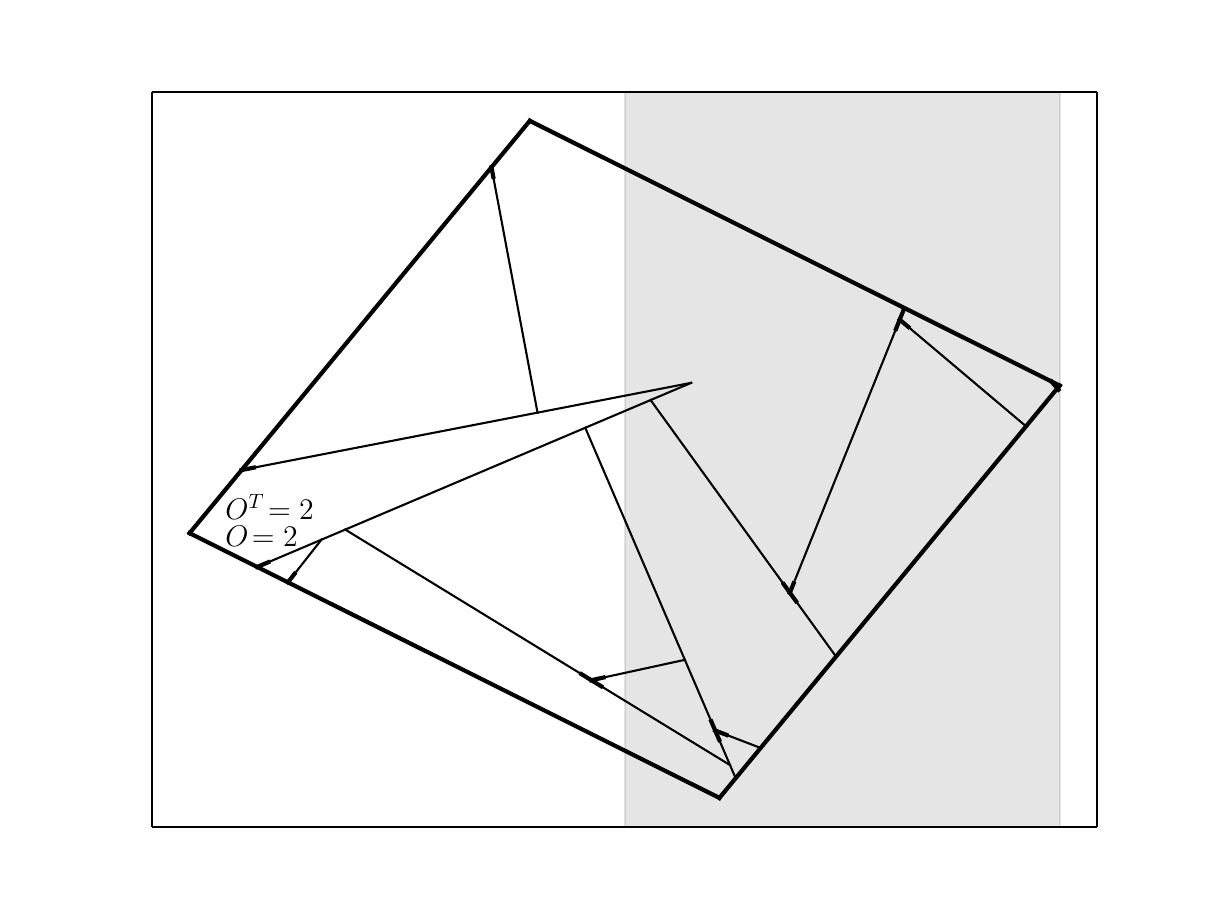}
             }\\
\subfloat[][]{
             \label{rec2m}
             \includegraphics[width=.49\textwidth]{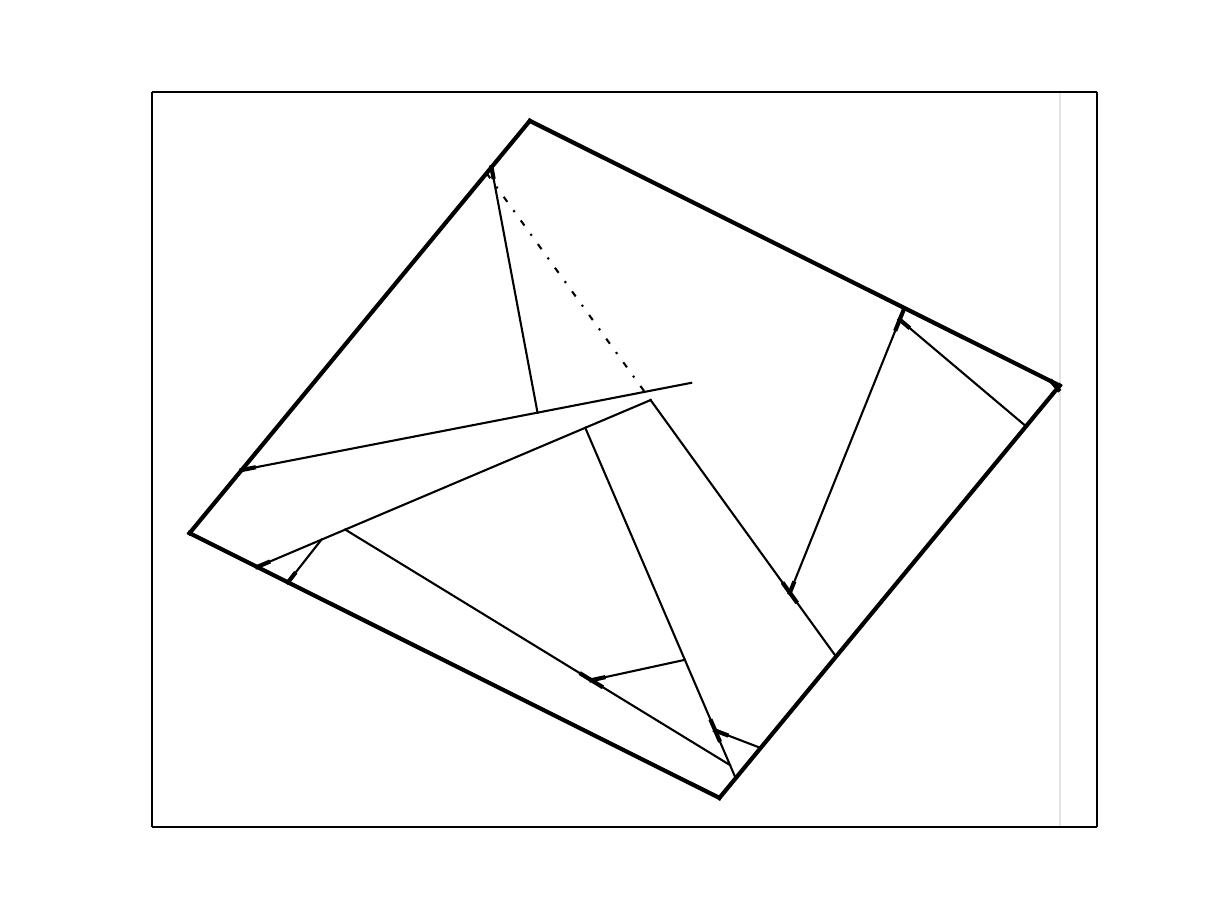}
             }
\subfloat[][]{
             \label{rec2n}
             \includegraphics[width=.49\textwidth]{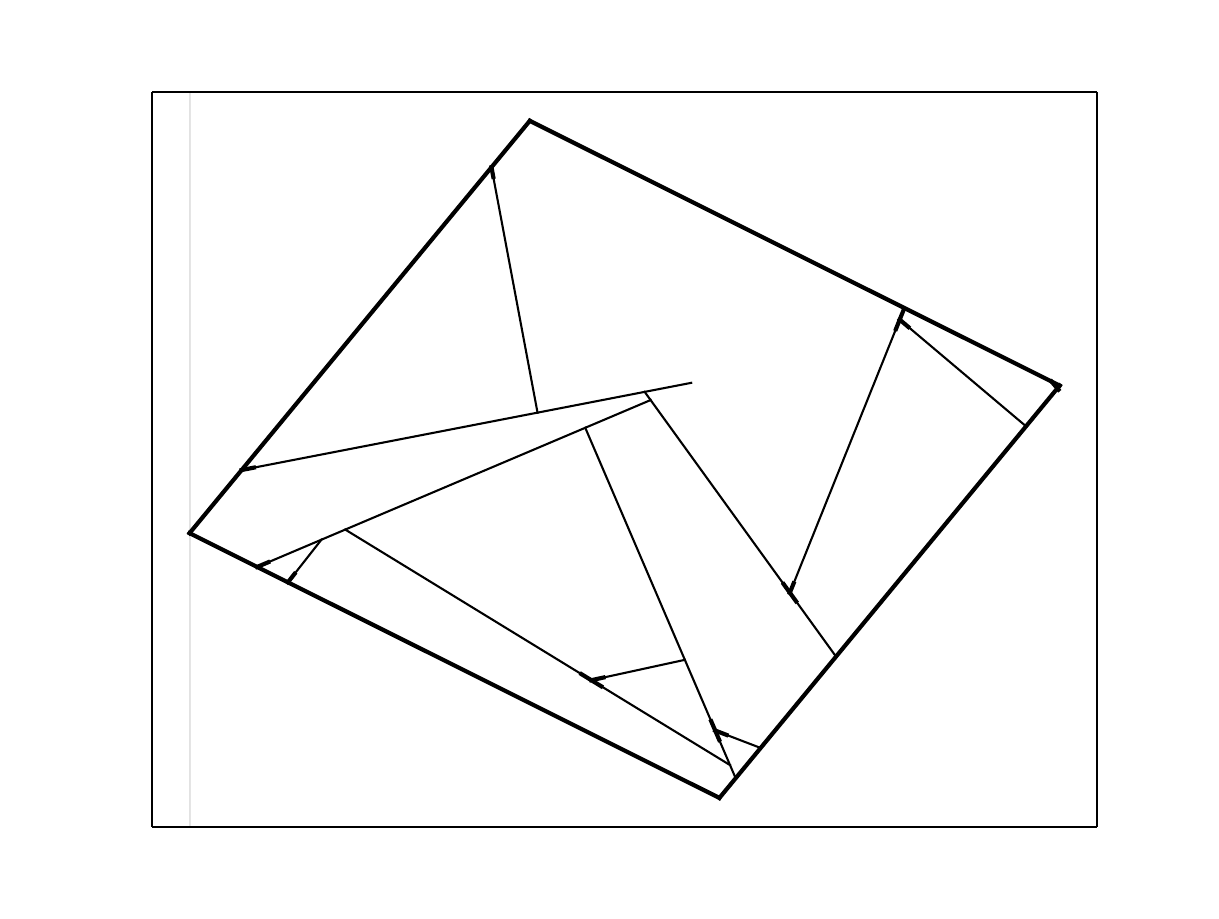}
             }\\
\subfloat[][]{
             \label{rec2o}
             \includegraphics[width=.49\textwidth]{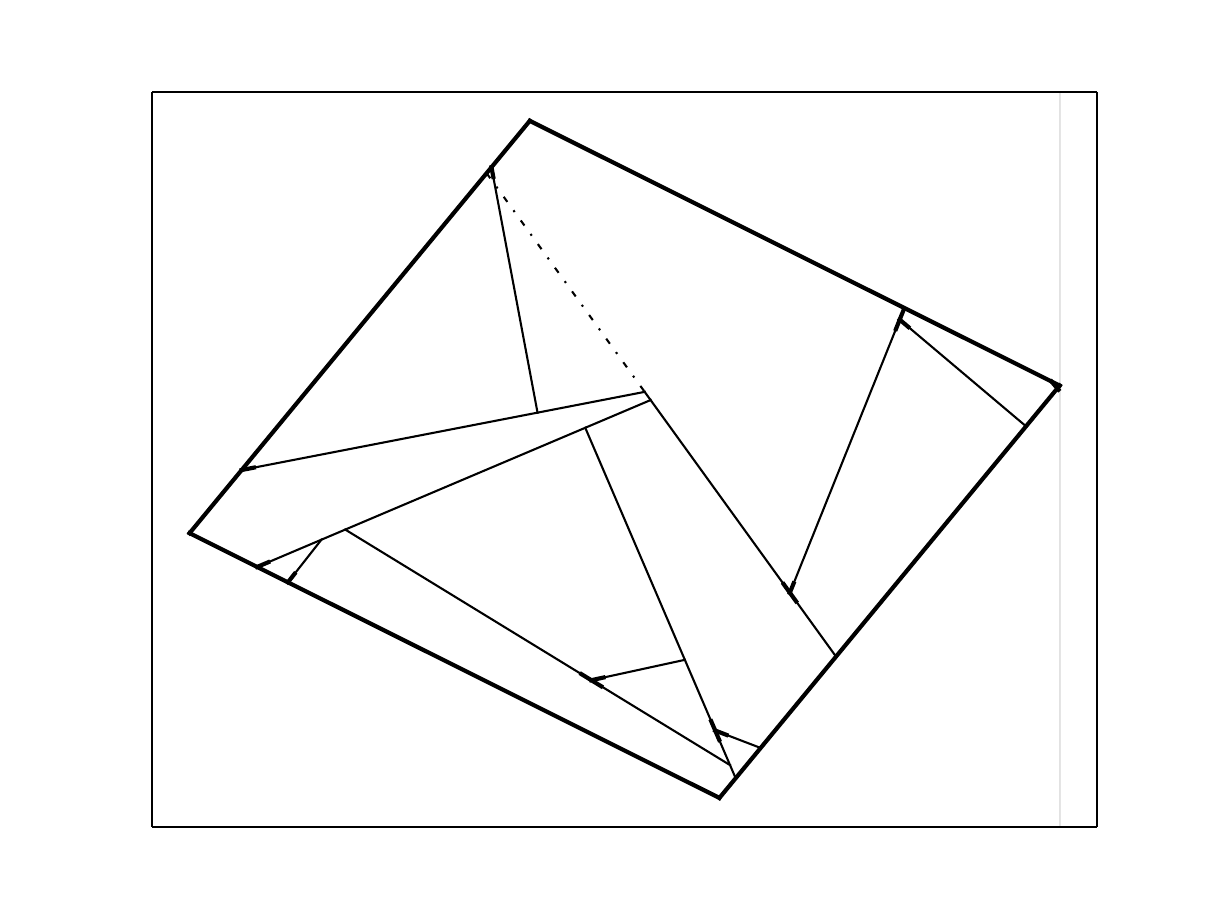}
             }
\subfloat[][]{
             \label{rec2p}
             \includegraphics[width=.49\textwidth]{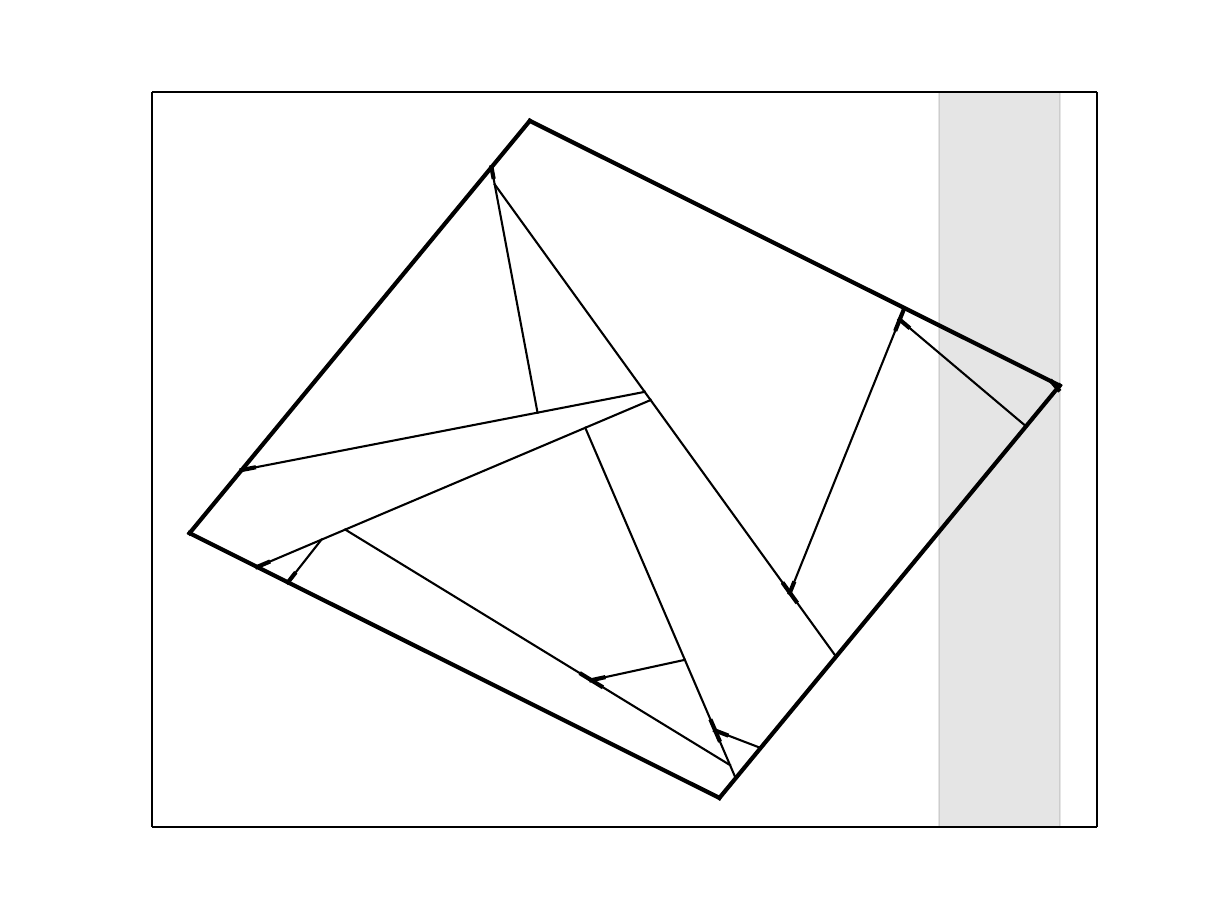}
             }
             \caption{Another pass backwards to extend the orphan \protect\subref{rec2k}. On the following forward pass, a line has one more child \protect\subref{rec2l}, so  cut \protect\subref{rec2m} is sooner than before \protect\subref{rec2i}. Next backwards: the orphan is extended \protect\subref{rec2n}. Next forward: the putative parent is cut \protect\subref{rec2o}. Next backwards and forward: the orphan is extended, and every line has the right number of orphan children; the algorithm stops \protect\subref{rec2p}.}
\end{figure}

\end{document}